\documentclass[11pt]{article}
\usepackage[margin=1in]{geometry}
\usepackage{amsmath,amssymb,amsthm,mathtools,amsfonts}
\usepackage[colorlinks=true,linkcolor=blue,citecolor=blue,urlcolor=blue]{hyperref}
\usepackage{authblk}
\usepackage[sort]{cite}
\usepackage{tikz}
\usetikzlibrary{backgrounds,fit,positioning}
\usepackage{graphicx}

\allowdisplaybreaks
\numberwithin{equation}{section}

\newtheorem{theorem}{Theorem}
\newtheorem{lemma}[theorem]{Lemma}

\newtheorem{question}[theorem]{Question}
\newtheorem{proposition}[theorem]{Proposition}
\theoremstyle{remark}

\newcommand \aln[2]
{
\begin{align}\label{#1}
#2
\end{align}
}

\newcommand{\Pl}{P_{\ell}}
\newcommand{\N}{\mathbb{N}}

\begin{document}

	\title{Non-persistence of equality between chromatic polynomials and list-color functions}
	
\author[1]{Meiqiao Zhang\thanks{Corresponding author. Email: 
			meiqiaozhang95@163.com and meiqiaozhang@xmu.edu.cn.}}
\author[2]{Fengming Dong\thanks{Email: fengming.dong@nie.edu.sg 
		and donggraph@163.com.}}
	
	\affil[1]{\small School of Mathematical Sciences, Xiamen  University, China}
	
	\affil[2]{\small
		National Institute of Education,
		Nanyang Technological University, 
		Singapore}

	\date{}
	
	\maketitle

\begin{abstract}                              
 For any graph $G$, let $P(G,k)$ and $P_{\ell}(G,k)$ denote the chromatic polynomial and the list-color function of $G$, respectively. 
 It remains an open problem
 whether,  for every graph
 $G$ and integer $k$,
 the equality 
 $P(G,k)=P_{\ell}(G,k)>0$
implies that 
 $P(G,k+1)=P_{\ell}(G,k+1)$
 also holds.
 In this paper, we answer this question in the negative. For every integer $k\ge 3$, we construct an infinite family of graphs $G$ such that $P(G,k)=P_{\ell}(G,k)>0$ while $P(G,k+1)>P_{\ell}(G,k+1)$.
 Moreover, using this infinite family of graphs as attachment gadgets, we further show that any graph $H$ with 
 $P(H,k)=P_{\ell}(H,k)>0$ can be developed into an infinite family of graphs $H'$ with $P(H',k)=P_{\ell}(H',k)>0$ and $P(H',k+1)>P_{\ell}(H',k+1)$.

\end{abstract}

\section{Introduction}\label{secintro}

In this article, we consider simple graphs only. 
For any graph $G$, let $V(G)$ and $E(G)$ be the vertex set and the edge set of $G$, respectively. For any non-empty subset $V_0$ of $V(G)$, let $G[V_0]$ denote the subgraph of $G$ induced by $V_0$. 
Denote by $\N$ the set of positive integers. For any $k,r\in\N$, let $[k]=\{1,\dots,k\}$ and $(k)_r=k(k-1)\cdots(k-r+1)$.

For any graph $G$, a 
{\it proper coloring} of $G$ is a mapping $\theta:V(G)\rightarrow \N$, such that $\theta(u)\neq \theta(v)$ for all $uv\in E(G)$.
For any $k\in \N$, a 
{\it proper $k$-coloring} of $G$ is a proper coloring $\theta$ with $\theta(v)\in [k]$ for all $v\in V(G)$. 
Then the \textit{chromatic polynomial} $P(G, k)$ of $G$ is a polynomial that counts the number of proper $k$-colorings of $G$ for each $k \in \N$. We say $G$ is \textit{uniquely $k$-colorable} if $G$ has no proper $(k-1)$-colorings and $V(G)$ has a unique partition into $k$ nonempty independent sets, which implies that $P(G,k)=k!$.
The chromatic polynomial was originally designed by Birkhoff in~\cite{Birk1912} as a tool to attack the Four-Color Conjecture, but later gained unique research significance because of its elegant properties. See~\cite{Dong2021, Dong2005, Read1988, Royle2009} for reference.

To generalize proper coloring, Vizing~\cite{Vizing1976} and Erd\H{o}s, Rubin and Taylor~\cite{Erdos1979} independently introduced the notion of list-coloring. For any graph $G$, a \textit{list assignment} $L$ of $G$ is a mapping from $V(G)$ to the power set of $\N$, and an $L$-\textit{coloring} of $G$ is a proper coloring $\theta$ with $\theta(v)\in L(v)$ for all $v\in V(G)$. Denote the number of $L$-colorings of $G$ by $P(G, L)$. Then $G$ is called  \textit{uniquely $L$-colorable} if $P(G,L)=1$.

We say $L$ is a \textit{$k$-list assignment} of $G$ if $|L(v)|=k$ holds for all $v\in V(G)$. Then the \textit{list-color function} $\Pl(G,k)$ of $G$ is defined to
be the minimum value of $P(G, L)$  over all $k$-list assignments $L$ of $G$ for each $k\in\N$. 
Observe that there is a \textit{trivial $k$-list assignment} $L^*$ of $G$ with $L^*(v)=[k]$ for all $v\in V(G)$, which leads to the equality $P(G,L^*)=P(G,k)$. Then by the definition of the list-color function, for each $k\in \N$,
\aln{listcolor}
{
\Pl(G,k) \le P(G,k).
}

Note that the inequality of (\ref{listcolor}) can hold nontrivially, e.g., $\Pl(K_{2,4},2) =0<2= P(K_{2,4},2)$.
Consequently, Kostochka and Sidorenko~\cite{Kosto} asked the question for which $k$ the equality of (\ref{listcolor}) holds.
Very soon, Donner~\cite{Donner1992} answered this question by establishing a deletion-contraction formula for the list-color function.

	\begin{theorem}[\cite{Donner1992}]\label{PLCG-Donner}
For any graph $G$, $P(G,k)=\Pl(G,k)$ if $k$ is sufficiently large.
	\end{theorem}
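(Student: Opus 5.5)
Since (\ref{listcolor}) already gives $\Pl(G,k)\le P(G,k)$, the task is to show that for $k$ large every $k$-list assignment $L$ of $G$ satisfies $P(G,L)\ge P(G,k)$. We follow Donner's idea and run a deletion--contraction argument in parallel for $P(G,L)$ and $P(G,k)$, inducting on $|E(G)|$. For a fixed $k$-list assignment $L$, we expand $P(G,L)$ over the partitions of $V(G)$ into independent sets, exactly as for the chromatic polynomial. For a partition $\pi$ into blocks $B_1,\dots,B_r$, let $N_L(\pi)$ be the number of ways to give distinct colors $c_1,\dots,c_r$ to the blocks with $c_i\in\bigcap_{v\in B_i}L(v)$. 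Then $P(G,L)=\sum_\pi N_L(\pi)$ and $P(G,k)=\sum_\pi (k)_{r(\pi)}$. The difference is that in the list setting a block whose vertices have different lists has fewer than $k$ admissible colors.

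Our plan is to compare the two sums by a coarser grouping. We group colorings according to the partition $\sigma$ of $V(G)$ into \emph{color classes}. For the trivial assignment, every partition into $r$ independent sets contributes $(k)_r$. For a general $L$, we instead count the colorings directly and bound the total from below by a sum that depends only on how much the lists differ. Concretely, let $n=|V(G)|$ and $m=|E(G)|$. A greedy count gives
\[
P(G,L)\ge \prod_{i=1}^{n}\bigl(k-\deg^-(v_i)\bigr),
\]
which is only a crude bound. A sharper approach is an inclusion--exclusion over the edges:
\[
P(G,L)=\sum_{S\subseteq E}(-1)^{|S|}\,\#\{\text{maps constant on components of }(V,S)\text{ and in the lists}\}.
\]
For each component $C$ of $(V,S)$, the count is $\bigl|\bigcap_{v\in C}L(v)\bigr|$, whereas in $P(G,k)$ it is $k$. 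The key step is then a local comparison. If two adjacent vertices $u,v$ have $L(u)\ne L(v)$, we want to show that $P(G,L)-P(G,k)$ is at least roughly $k^{n-2}$ times a positive quantity, minus $O(k^{n-2})$ error terms with constants depending only on $G$. The intuition is that the constraint on the edge $uv$ removes only $|L(u)\cap L(v)|\cdot k^{n-2}\lesssim (k-1)k^{n-2}$ colorings instead of $k\cdot k^{n-2}$, so the saving is of order $k^{n-2}$ per edge whose endpoints have different lists.

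To make this precise we organize the argument as follows. For each edge $e=uv$, set $d_e=k-|L(u)\cap L(v)|$. The leading-order terms of the inclusion--exclusion, those with $|S|\le 1$, give
\[
P(G,L)-P(G,k)=\sum_e d_e\,k^{n-2}+R,
\]
where $R$ collects all the terms with $|S|\ge 2$. The main obstacle is bounding $R$: we must show $|R|\le C_G\bigl(\sum_e d_e\bigr)k^{n-3}$, with the constant $C_G$ depending only on $G$. This is needed because each term with $|S|\ge2$ differs from its trivial counterpart by at most a $G$-dependent multiple of the list defects along the edges of $S$, times $k^{n-|S|-1}$. To prove it, we write $\bigl|\bigcap_{v\in C}L(v)\bigr|=k-\delta_C$, observe that $\delta_C\le\sum_{e\in T}d_e$ for any spanning tree $T$ of $C$ inside $S$ (using the elementary inequality $|A\cap B\cap\cdots|\ge k-\sum$ of pairwise defects along a tree), and expand the products. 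Once this bound holds, for $k>C_G$ we get $P(G,L)-P(G,k)\ge \bigl(\sum_e d_e\bigr)k^{n-3}(k-C_G)\ge 0$, and equality forces $d_e=0$ for every $e$. Taking $k$ at least $C_G$, which depends only on $n$ and $m$ (for instance $C_G\le 2^m n$), proves Theorem~\ref{PLCG-Donner}.
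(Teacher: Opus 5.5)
Your proof is correct once the false starts are discarded, and it takes a different route from the one the paper points to. The paper does not prove this cited result. It only records that Donner obtained it from a deletion--contraction formula for list colorings, and it cites the later explicit thresholds of Thomassen, Wang--Qian--Yan (a list version of Whitney's broken-cycle theorem) and Dong--Zhang. You instead compare the full subset expansions
\[
P(G,L)=\sum_{S\subseteq E}(-1)^{|S|}\prod_{C}\Bigl|\bigcap_{v\in C}L(v)\Bigr|,
\qquad
P(G,k)=\sum_{S\subseteq E}(-1)^{|S|}k^{c(S)}
\]
term by term. The single-edge terms produce the gain $k^{n-2}\sum_e d_e$, and every term with $|S|\ge 2$ is at most $k^{n-3}$ times the defects along $S$. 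This gives a short, self-contained argument with an explicit threshold that is exponential in $m$. It also gives the extra fact that, above the threshold, any $L$ with $P(G,L)=P(G,k)$ has $L(u)=L(v)$ on every edge. What it gives up is sharpness. Bounding all $2^m$ subsets in absolute value ignores the cancellation that the broken-cycle approach removes before estimating, and that removal is how the bounds in Theorems~\ref{PLCG-wangg} and~\ref{PLCG-cor1-1} become linear in $|E(G)|$.

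Some tidying is needed.
\begin{itemize}
\item The announced deletion--contraction induction on $|E(G)|$, the partition sum $\sum_\pi N_L(\pi)$ and the greedy product bound play no role and should be dropped.
\item Your claim that a term with $|S|\ge 2$ differs from its trivial counterpart by a multiple of $k^{n-|S|-1}$ is false when $S$ contains a cycle. For a triangle $S$ one has $c(S)=n-2$, and the difference is $\delta_C k^{n-3}$, not $O(k^{n-4})$. What is true, and all you need, is $c(S)=n-r(S)\le n-2$ whenever $|S|\ge 2$; this uses that $G$ is simple.
\item Instead of expanding the products, use $\prod_C(1-\delta_C/k)\ge 1-\sum_C\delta_C/k$ together with your spanning-tree bound $\delta_C\le\sum_{e\in T_C}d_e$. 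This gives
\[
0\le k^{c(S)}-\prod_C(k-\delta_C)\le k^{c(S)-1}\sum_C\delta_C\le k^{n-3}\sum_{e\in S}d_e .
\]
\end{itemize}
Summing over $|S|\ge 2$ gives $|R|\le (2^{m-1}-1)\,k^{n-3}\sum_e d_e$. Hence $P(G,L)-P(G,k)\ge k^{n-3}\bigl(\sum_e d_e\bigr)(k-2^{m-1}+1)\ge 0$ for $k\ge 2^{m-1}-1$.
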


In fact, Theorem~\ref{PLCG-Donner} reveals that the list-color function  and chromatic polynomial of every graph eventually agrees exactly. It further indicates that $\Pl(G,k)$ inherits all the nice properties of the chromatic polynomial when $k$ is sufficiently large.
Following Theorem~\ref{PLCG-Donner}, the next question is naturally that for any graph $G$, what is the minimum integer $\tau(G)$ such that $P(G,k)=\Pl(G,k)$ whenever $k\ge \tau(G)$.
 In 2009, Thomassen~\cite{Thomassen2009} gave the first answer regarding the order of the graph.

	\begin{theorem}[\cite{Thomassen2009}]\label{PLCG-Thom}
For any graph $G$, $\tau(G)\le |V(G)|^{10}+1$.
	\end{theorem}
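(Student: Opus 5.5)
The statement is Thomassen's theorem that $\tau(G)\le |V(G)|^{10}+1$. I would prove it by writing $P(G,L)$ and $P(G,k)$ as inclusion–exclusion sums over edge subsets, or over partitions of $V(G)$ into independent sets. The goal is to show that any $k$-list assignment $L$ that is not the trivial one gives at least as many colorings once $k$ is large compared with $n=|V(G)|$.

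The first step expresses both counts through partitions. For a partition $\pi$ of $V(G)$ into independent blocks $B_1,\dots,B_r$, the number of $L$-colorings in which exactly the blocks of $\pi$ share a color is the number of injective choices $c(B_i)\in \bigcap_{v\in B_i}L(v)$. In the trivial case this number is $(k)_r$. Summing over partitions with exact counts is awkward, so I would work instead with the Möbius/Whitney form
\[
P(G,L)=\sum_{A\subseteq E(G)}(-1)^{|A|}\prod_{C}\Bigl|\bigcap_{v\in C}L(v)\Bigr|,
\]
where the product runs over the components $C$ of the spanning subgraph $(V,A)$. The analogous formula for $P(G,k)$ has $k^{c(A)}$ in place of the product.

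The second step is a comparison. Fix a vertex set or structure where the lists differ and group terms. The leading terms (few edges, many components) dominate when $k\gg n$. Following Donner's idea, I would do deletion–contraction on an edge $uv$ with $L(u)\neq L(v)$. The colorings of $G-uv$ in which $u$ and $v$ receive the same color number at most $|L(u)\cap L(v)|$ times a bound on the colorings of the contracted graph, while the total number of colorings of $G-uv$ is at least a lower bound of order $k^{n}$-ish. The deficit $k-|L(u)\cap L(v)|\ge 1$ then yields a gain of order $k^{n-1}$. I would compare this against the error terms, which are bounded by (number of edge subsets or partitions) $\times k^{n-2}$. To keep the count polynomial rather than $2^{|E|}$, I would use broken-circuit or partition counts with degree control, of order $n^{c}$.

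The main obstacle is making this quantitative with only a polynomial loss. A naive inclusion–exclusion bound gives $2^{|E|}$, which is exponential. What I would try first is to bound the number of "bad" configurations by pairs and triples of vertices and vertex-colour incidences. The idea is to show that the difference $P(G,L)-P(G,k)$ expands as a main positive term involving pairs $(u,v)$ with differing lists, minus corrections. Each correction would be charged to a constant number of vertices, so it carries polynomially many terms $n^{O(1)}$, each a factor $k$ smaller. Choosing $k>n^{10}$ then makes the main term dominate. The trivial assignment, and any assignment isomorphic to it by a renaming of colours, gives equality, so $P_\ell(G,k)=P(G,k)$.
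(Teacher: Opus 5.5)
The paper gives no proof of this statement; it is quoted from~\cite{Thomassen2009}, and it also follows from Theorem~\ref{PLCG-wangg}, since $|E(G)|<|V(G)|^2$. So your proposal has to stand on its own. It points toward the Whitney-expansion route of~\cite{Wang2017}, but its quantitative core is missing, and where it is stated it is wrong.

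Write $n=|V(G)|$, $m=|E(G)|$ and $d_{uv}=|L(u)\setminus L(v)|=k-|L(u)\cap L(v)|$.

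\textbf{The gain is overstated.} The term $A=\{uv\}$ contributes exactly $k^{n-2}d_{uv}$ to $P(G,L)-P(G,k)$. An edge with deficit $1$ therefore gains only order $k^{n-2}$, not $k^{n-1}$. Corrections bounded in absolute terms by (number of configurations)$\times k^{n-2}$ are then not dominated at all.

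\textbf{Corrections must be proportional to deficits.} Every correction has to be bounded \emph{proportionally to the deficits of the edges it involves}. A set $A$ whose edges all have $d_e=0$ contributes nothing, because the lists are then constant on the components of $(V,A)$. Your idea of corrections ``charged to a constant number of vertices'' does not supply this.

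\textbf{Polynomial counting cannot work.} Over all $A\subseteq E(G)$, $c(A)$ need not drop linearly in $|A|$ once $A$ contains cycles, so absolute-value bounds lose exponentially. Even restricted to forests there are exponentially many terms (every edge subset of a tree is a forest). You mention broken circuits, but in the role of making the count of order $n^{c}$, which it is not.

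\textbf{The deletion--contraction step has the same defect.} There is no induction hypothesis controlling $P(G/uv,L')$ against $P(G/uv,k)$. The crude bound $P(G/uv,L')\le |L(u)\cap L(v)|\,k^{n-2}$ is off by exactly the $k^{n-2}$-order terms on which the comparison hinges.

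\textbf{The missing ingredients.}

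First, Whitney's broken-cycle cancellation holds verbatim for lists. Toggling an edge whose ends already lie in one component of $(V,A)$ leaves $\prod_C|\bigcap_{v\in C}L(v)|$ unchanged. So the expansion may be restricted to NBC sets, which are forests with $c(A)=n-|A|$.

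Second, take such a forest and walk along each component tree from a root. This gives $|\bigcap_{v\in C}L(v)|\ge k-\sum d_e$, the sum running over the edges of $A$ inside $C$. Using $\prod(1-x_i)\ge 1-\sum x_i$, it follows that $0\le k^{c(A)}-\prod_C|\bigcap_{v\in C}L(v)|\le k^{n-|A|-1}\sum_{e\in A}d_e$. The singleton sets are NBC and supply the main term.

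Third, each edge lies in at most $\binom{m-1}{j-1}$ NBC sets of size $j$. Summing over all sizes gives
\[
P(G,L)-P(G,k)\;\ge\;k^{n-2}\Bigl(\sum_{e\in E(G)}d_e\Bigr)\Bigl(2-\bigl(1+\tfrac{1}{k}\bigr)^{m-1}\Bigr).
\]
The right-hand side is nonnegative for $k\ge (m-1)/\ln 2$, which is far below $n^{10}+1$. Together with \eqref{listcolor} this gives $\Pl(G,k)=P(G,k)$.

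What tames the exponentially many terms is this geometric decay $k^{-(|A|-1)}$ set against $\binom{m-1}{|A|-1}$. It is not a polynomial count of configurations.
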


Later in 2017, Wang, Qian and Yan~\cite{Wang2017} improved Theorem~\ref{PLCG-Thom} by developing a list-coloring version of Whitney's broken cycle theorem.

	\begin{theorem}[\cite{Wang2017}]\label{PLCG-wangg}
For any graph $G$, $\tau(G)\le\frac{|E(G)|-1}{\log(1+\sqrt{2})}+1$.
	\end{theorem}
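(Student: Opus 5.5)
The plan follows the strategy of Wang, Qian and Yan: compare the Whitney broken-cycle expansions of $P(G,k)$ and $P(G,L)$ for an arbitrary $k$-list assignment $L$. Fix a total order on $E(G)$. By Whitney's theorem, $P(G,k)=\sum_{S}(-1)^{|S|}k^{|V(G)|-|S|}$, where $S$ ranges over edge sets containing no broken cycle. For a list assignment $L$, inclusion--exclusion over edge subsets $S$ whose endpoints are forced to share a color gives
\begin{align*}
P(G,L)=\sum_{S\subseteq E(G)}(-1)^{|S|}\prod_{C}\Bigl|\bigcap_{v\in C}L(v)\Bigr|,
\end{align*}
where $C$ runs over the components of the spanning subgraph $(V(G),S)$. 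The first step is a list version of the broken-cycle cancellation. We pair sets $S$ containing a broken cycle with $S\triangle\{e\}$, where $e$ is the smallest edge closing such a cycle. This cancels their contributions exactly, because the component partitions coincide. So $P(G,L)$ reduces to the same sum restricted to broken-cycle-free $S$, that is, over forests.

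The second step is to measure the deviation $D(L)=P(G,L)-P(G,k)$. We group forests by their component partition and write each intersection size as $k$ minus a ``loss''. For each forest $S$, the term $\prod_C|\bigcap_{v\in C}L(v)|-k^{c(S)}$ can be expanded via the non-broken-circuit structure. The aim is to show that the negative contributions are dominated by positive ones whenever $k$ is large relative to $|E(G)|$. Concretely, I would introduce for each pair $uv\in E(G)$ the quantity $\alpha_{uv}=k-|L(u)\cap L(v)|$. I would then prove a bound of the form
\begin{align*}
D(L)\ \ge\ \sum_{uv\in E(G)}\alpha_{uv}\Bigl(k^{|V(G)|-2}-\sum_{j\ge 1}N_j k^{|V(G)|-2-j}\Bigr),
\end{align*}
where $N_j$ counts broken-cycle-free edge sets of size $j$ associated with a given edge. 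This uses the fact that replacing $k$ by an intersection size only decreases each factor by at most the sum of pairwise losses along a spanning tree of the component.

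The third step is the counting estimate. The number of broken-cycle-free $j$-subsets containing or adjacent to a fixed edge is at most $\binom{|E(G)|-1}{j}$, so the bracket is positive once $(1+1/k)^{|E(G)|-1}<2$ roughly. A sharper Fibonacci/Pell-type recursion on the growth of these sets should replace $2$ by a quantity tied to $1+\sqrt2$, giving positivity whenever $(k-1)\log(1+\sqrt2)\ge |E(G)|-1$. This yields $D(L)\ge 0$ for all $L$, i.e.\ $P_\ell(G,k)=P(G,k)$ for $k\ge \frac{|E(G)|-1}{\log(1+\sqrt2)}+1$.

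The main obstacle is the second and third steps together. First, the sign bookkeeping must be arranged so that the product over components of intersection sizes can be compared linearly with the losses $\alpha_{uv}$. Second, the combinatorial count has to be strong enough to produce exactly the constant $\log(1+\sqrt2)$ rather than a cruder $\log 2$. For the count, I would first try an induction on the edge order that deletes the largest edge. This gives a two-term recursion whose characteristic root is $1+\sqrt2$.
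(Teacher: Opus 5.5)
The paper does not prove this statement; it only quotes it from Wang, Qian and Yan, so your proposal has to stand on its own. Your first two steps are sound. The inclusion--exclusion formula for $P(G,L)$ reduces to broken-cycle-free sets by the usual toggling involution. The estimates $k-|\bigcap_{v\in C}L(v)|\le\sum_{e\in T_C}\alpha_e$ and $k^{c}-\prod_C(k-\beta_C)\le k^{c-1}\sum_C\beta_C$ then give your displayed inequality, with $N_j$ the number of broken-cycle-free sets of size $j+1$ containing the fixed edge.

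The gap is in the third step, and no better count can close it. Write $n=|V(G)|$ and $m=|E(G)|$. If $G$ is a forest, every edge set is broken-cycle-free, so $N_j=\binom{m-1}{j}$ exactly. Your bracket is then $k^{n-2}\bigl(2-(1+1/k)^{m-1}\bigr)$. For $k$ near $(m-1)/\log(1+\sqrt2)$ and $m$ large, this is about $k^{n-2}(1-\sqrt2)<0$. So no Pell-type recursion on the numbers of broken-cycle-free sets can produce the constant. The loss already happens in step 2, where you bound every term in absolute value and discard the signs.

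The missing idea is to use the sign. For every $S$ one has $\prod_C|\bigcap_{v\in C}L(v)|\le k^{c(S)}$. Hence each broken-cycle-free $S$ of odd size $\ge 3$ contributes $k^{c(S)}-\prod_C|\bigcap_{v\in C}L(v)|\ge 0$ to $D(L)$ and can be dropped. The size-one sets contribute exactly $\alpha_e k^{n-2}$. Only the even sizes $|S|\ge 2$ need to be bounded, and this gives
\[
D(L)\ \ge\ \sum_{e\in E(G)}\alpha_e k^{n-2}\Bigl(1-\sum_{j\ \text{odd}}\binom{m-1}{j}k^{-j}\Bigr)\ \ge\ \sum_{e\in E(G)}\alpha_e k^{n-2}\Bigl(1-\sinh\frac{m-1}{k}\Bigr),
\]
using $\binom{m-1}{j}\le (m-1)^j/j!$. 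Since $\sinh(\log(1+\sqrt2))=1$, the right side is nonnegative whenever $k\ge (m-1)/\log(1+\sqrt2)$. This gives $\tau(G)\le\lceil (m-1)/\log(1+\sqrt2)\rceil$, which implies the stated bound. So $1+\sqrt2$ enters as $e^{\sinh^{-1}1}$: it comes from keeping only the odd-index half of the binomial expansion, not from the characteristic root of a recursion.
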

	
	Recently, by refining the techniques in~\cite{Wang2017}, Dong and Zhang gave a better result in~\cite{Dong22}.

\begin{theorem}[\cite{Dong22}]\label{PLCG-cor1-1}
For any graph $G$, $\tau(G)\le |E(G)|-1$.
\end{theorem}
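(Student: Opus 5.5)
The plan is to follow the approach of Wang, Qian and Yan behind Theorem~\ref{PLCG-wangg}: a list-coloring version of Whitney's broken cycle theorem, with the estimates tightened. Fix a $k$-list assignment $L$ of $G$ with $k\ge |E(G)|-1$ and order the edges $e_1<\dots<e_m$. By inclusion--exclusion over edge subsets $S\subseteq E(G)$,
\[
P(G,L)=\sum_{S\subseteq E(G)}(-1)^{|S|}\,c_L(S),
\]
where $c_L(S)$ is the number of assignments $V(G)\to\N$, respecting $L$, that are constant on each component of the spanning subgraph $(V(G),S)$. Thus $c_L(S)=\prod_{C}|\bigcap_{v\in C}L(v)|$ over the components $C$. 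For the trivial assignment this reduces to Whitney's formula, in which only broken-cycle-free sets survive. For general $L$, I will run the usual involution that pairs $S$ with $S\triangle\{e\}$, where $e$ is the smallest edge completing a broken cycle in $S$. This pairing no longer cancels exactly. The terms that survive are organized according to which pairs of vertices $u,v$ have $L(u)\neq L(v)$. The goal is an identity
\[
P(G,L)-P(G,k)=\sum_{j}\Delta_j,
\]
where each $\Delta_j$ is indexed by a broken-cycle-free forest $F$ together with a choice of edges whose endpoint lists differ. Each $\Delta_j$ is a product of factors of the form $|L(u)\cap L(v)|$ or $k-|L(u)\cap L(v)|$.

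The key step is then to show that the positive contribution from each ``defect'' outweighs the negative contributions. The idea is to group terms by the vertex pair $(u,v)$ with $L(u)\ne L(v)$, and to compare the loss created by one edge with unequal lists against the count of colorings that gain from it. For the comparison I will use $k^{n-i}$-type counts of broken-cycle-free sets with $i$ edges. These are bounded by $\binom{m}{i}$, or more sharply via the coefficients $a_i$ of the chromatic polynomial, using $a_{i+1}\le \frac{m-i}{i+1}a_i$-type monotonicity. The net difference $P(G,L)-P(G,k)$ should become a sum of terms each at least $k^{r}\bigl(k-(m-1)\bigr)\cdot(\text{nonnegative})$. This is nonnegative precisely when $k\ge m-1$. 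It gives $P(G,L)\ge P(G,k)$ for every $L$, so $\Pl(G,k)=P(G,k)$ for all $k\ge |E(G)|-1$, which is the statement $\tau(G)\le|E(G)|-1$.

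The main obstacle is this comparison step. The earlier bound $\frac{m-1}{\log(1+\sqrt2)}+1$ comes from a crude geometric-series estimate of the alternating tail, and improving the threshold to $m-1$ requires a sharper term-by-term domination. First I will try a pairing argument. Each bad term, which involves an edge $e$ with unequal endpoint lists plus $i$ further edges, would be matched to a good term with one fewer edge. The ratio would be bounded by $(m-1)/k$, since at most $m-1$ other edges can be added to a given configuration. If that fails, I will set up an induction on $|E(G)|$ via a Donner-type deletion--contraction inequality for $P(G,L)-P(G,k)$, carrying the stronger inductive hypothesis that the difference is at least $\bigl(k-|E|+1\bigr)$ times a nonnegative quantity.
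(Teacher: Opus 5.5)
The paper only quotes this result from Dong--Zhang and gives no proof, so your plan has to stand on its own. It has a genuine gap: the decisive comparison is announced as ``the main obstacle'' and left open, and part of the setup leading to it is wrong.

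First, Whitney's involution \emph{does} cancel exactly for list assignments. If $e$ closes a broken cycle contained in $S\setminus\{e\}$, the ends of $e$ are already joined in $(V(G),S\setminus\{e\})$. So $S$ and $S\triangle\{e\}$ have the same components, and $c_L(S)=c_L(S\triangle\{e\})$. Write $\mathcal{N}_i$ for the broken-cycle-free sets of size $i$ (all forests), and use the same edge order for $P(G,k)$. Then one gets exactly
\[
P(G,L)-P(G,k)=\sum_{i\ge 1}(-1)^{i+1}T_i,\qquad T_i=\sum_{F\in\mathcal{N}_i}\bigl(k^{n-i}-c_L(F)\bigr)\ge 0 .
\]
So there are no leftover ``surviving'' terms to organize. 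Moreover, $c_L(F)$ involves $|\bigcap_{v\in T}L(v)|$ over whole tree components $T$ of $F$, so the terms are not products of pairwise factors $|L(u)\cap L(v)|$ or $k-|L(u)\cap L(v)|$ as your $\Delta_j$ are meant to be. Global counts such as $\binom{m}{i}$ or the coefficients $a_i$ do not by themselves control the sum; the deficiencies $k^{n-i}-c_L(F)$ do. Also, if you bound the even levels from above and simply discard the odd levels $i\ge 3$, you recover essentially only the threshold of Theorem~\ref{PLCG-wangg}. So the odd levels must genuinely be used.

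Your pairing heuristic points in the right direction, but as stated it is not valid, and the ingredients that make it work are missing. Put $\delta_e=k-|L(u)\cap L(v)|$ for $e=uv$. A set $F'\in\mathcal{N}_i$ whose only defective edge has $\delta_e=1$ has deficiency exactly $k^{n-i-1}$. Yet it can have up to $m-i$ extensions $F'\cup\{f\}\in\mathcal{N}_{i+1}$, each of deficiency $k^{n-i-1}$ (take $L$ disjoint on the ends of $f$). So ``at most $m-1$ edges can be added'' does not give a ratio $(m-1)/k$ between a good term and the bad terms above it. The charge has to be routed according to which edge carries the defect.

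Concretely, you need the two-sided estimate
\[
k^{n-i-1}\max_{e\in F}\delta_e\le k^{n-i}-c_L(F)\le k^{n-i-1}\sum_{e\in F}\delta_e\qquad (F\in\mathcal{N}_i).
\]
The upper half follows from $k-|\bigcap_{v\in T}L(v)|\le\sum_{e\in E(T)}\delta_e$ for each tree component $T$. Combine it with a double count: for each pair $(F,e)$ with $F\in\mathcal{N}_{i+1}$ and $e\in F$, spread the contribution $k^{n-i-2}\delta_e$ equally over the $i$ sets $F\setminus\{g\}$ with $g\ne e$. Each member of $\mathcal{N}_i$ has at most $m-i$ extensions in $\mathcal{N}_{i+1}$, so this gives $T_{i+1}\le\frac{m-i}{k}T_i$. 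Hence $T_{i+1}\le T_i$ for all $i\ge 1$ when $k\ge m-1$, and the alternating sum is nonnegative.

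None of these steps is in your proposal. The fallback induction is not formulated either: contracting $uv$ produces the list $L(u)\cap L(v)$ of size less than $k$, so the strengthened hypothesis would have to cover non-uniform lists. As it stands, the proposal is a plan whose central inequality is unproved.
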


By the definition of $\tau(G)$, the authors of \cite{Donner1992,Thomassen2009,Wang2017,Dong22} were required to verify the equality for all relatively large integers $k$, which is by no means trivial. This naturally raises the question of whether such a verification is always necessary. More specifically, the following question has recurred repeatedly over the past 10 years.

\begin{question}[\cite{Chi2026,Kaul23,Dahlberg2024,Bui2021,Kaul22,KaulMudrock2021,Beck2021,Kirov2016}]\label{ques1}
For all graphs $G$ and integers $k$, does the equality 
$P(G,k)=\Pl(G,k)>0$
always imply
$P(G,k+1)=\Pl(G,k+1)$?
\end{question}

Question~\ref{ques1} is trivially true for $k=1$. For $k=2$, all graphs $G$ satisfying $P(G,2)=\Pl(G,2)>0$ have been characterized in~\cite{Allred2025}, and each such graph has the property that $P(G,k)=\Pl(G,k)$ holds for all $k\in\mathbb{N}$. For $k\ge3$, the question remains open since 2016.

Also, a question parallel to Question~\ref{ques1} concerning the DP-color function and the chromatic polynomial was posed in~\cite{KaulMudrock2021} and subsequently resolved in~\cite{Dahlberg2024,Bui2021} by constructing an infinite family of counterexamples.

In this paper, we show that the answer to Question~\ref{ques1} is also negative by constructing an infinite family of graphs as counterexamples.

\begin{theorem}~\label{main1}
For each integer $k\ge 3$, there are infinitely many graphs $G$ such that $P(G,k)=\Pl(G,k)>0$ and
$P(G,k+1)>\Pl(G,k+1)$.
\end{theorem}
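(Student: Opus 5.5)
The plan is to build one base graph $G_k$ with $P(G_k,k)=\Pl(G_k,k)>0$ and $P(G_k,k+1)>\Pl(G_k,k+1)$, and then get infinitely many examples from it by an operation that multiplies both functions by the same factor. For that operation I would use pendant vertices. If $G'$ is obtained from $G$ by adding a vertex $w$ adjacent to a single vertex $u$, then $P(G',k)=(k-1)P(G,k)$. For lists, every $L$-coloring of $G$ extends to $w$ in at least $|L(w)|-1=k-1$ ways, so $\Pl(G',k)\ge (k-1)\Pl(G,k)$. Conversely, take a $k$-list assignment $L$ of $G$ attaining $\Pl(G,k)$ and set $L(w)=L(u)$. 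Then every $L$-coloring of $G$ extends in exactly $k-1$ ways, so $\Pl(G',k)=(k-1)\Pl(G,k)$. The same identity holds with $k+1$ in place of $k$. Hence both the equality at $k$ and the strict inequality at $k+1$ survive, and attaching $1,2,3,\dots$ pendant vertices gives the infinite family. The real work is therefore the base graph.

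For the base graph I would aim for a \emph{rigid} structure at $k$ colours: a graph in which every proper $k$-coloring is essentially forced. A natural candidate is a join $K_{k-2}\vee H$, possibly together with extra vertices that pin down a nearly unique $k$-coloring. With trivial lists this gives $P(G_k,k)=(k)_{k-2}P(H,2)$, which is small and explicit. At $k+1$ the same graph has $(k+1)_{k-2}P(H,3)$ colorings. Each clique vertex now leaves the vertices of $H$ with a $3$-list rather than a $2$-list, which gives room for a non-trivial $(k+1)$-list assignment. I would design it in the spirit of $\Pl(K_{2,4},2)=0<P(K_{2,4},2)$: give the clique vertices lists that share only part of a common palette, so that the lists induced on $H$ form a pattern like the standard bad $3$-list assignment of a $K_{3,m}$-type subgraph. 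This part only needs one explicit list assignment $L$ and a direct count showing $P(G_k,L)<P(G_k,k+1)$.

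The main obstacle is the lower bound $\Pl(G_k,k)\ge P(G_k,k)$, which must hold for \emph{every} $k$-list assignment. The result of~\cite{Allred2025} already shows that the case $k=2$ cannot produce a counterexample. So $H$ must not be a graph whose $2$-list behaviour alone decides the matter, and the equality at $k$ has to come from interaction with the clique. My first attempt would be a case analysis on how the lists of the clique vertices overlap with the lists of their neighbours. If all lists coincide we recover $P(G_k,k)$ exactly. Otherwise I would exhibit an injection from proper $k$-colorings into $L$-colorings. I would build it by fixing a bijection between $[k]$ and the lists along the clique and using that a colour missing from some list frees extra choices elsewhere. This is the kind of argument behind Donner-type deletion-contraction bounds. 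If the injection fails for a particular overlap pattern, I would add further vertices to $G_k$ (for instance more vertices joined to the clique) until the $k$-colour case becomes rigid enough to count directly. I would check that these additions do not destroy the $(k+1)$-list assignment from the previous step, since that assignment only uses the local gadget.
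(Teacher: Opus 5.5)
\textbf{Your pendant-vertex step is fine; the gap is the base graph.} The pendant-vertex reduction is correct. It is the case $H=K_2$ of the one-vertex gluing inequality, plus the greedy lower bound. So infinitude is not the problem, and the paper simply gets it from a size parameter $t$ instead.

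What is missing is the base graph itself. You never specify $G_k$, never write down the $(k+1)$-list assignment or count its colorings. Most importantly, you never prove $\Pl(G_k,k)\ge P(G_k,k)$ for every $k$-list assignment. You name this as the main obstacle, then defer it to an injection or case analysis that is not carried out, with the fallback of adding vertices ``until the $k$-colour case becomes rigid enough.'' The existence of one such graph is the whole content of the previously open question, so this is a plan rather than a proof.

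Your suggested candidate is also constrained. Suppose $G_k=K_{k-2}\vee H$. The known join bound gives $\Pl(K_{k-2}\vee H,k+1)\ge (k+1)_{k-2}\Pl(H,3)$. Combined with the fact that $P(H,2)=\Pl(H,2)>0$ forces $\Pl(H,j)=P(H,j)$ for all $j$, this means $H$ must be bipartite with $\Pl(H,2)<P(H,2)$. But then the same join bound at $k$ gives only $\Pl(G_k,k)\ge (k)_{k-2}\Pl(H,2)<P(G_k,k)$. So the equality at $k$ would need exactly the new argument you have not supplied.

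\textbf{The idea that makes the paper's proof work.} Choose $G$ so that $P(G,k)$ equals the greedy product $\prod_v\bigl(k-d^-(v)\bigr)$ along some vertex order, where $d^-(v)$ counts the neighbors of $v$ that precede it. That product is automatically a lower bound for every $k$-list assignment, so the equality at $k$ comes for free. Concretely, the paper uses:
\begin{itemize}
\item a clique $\{x_1,\dots,x_k\}$;
\item two nonadjacent vertices $s_1,s_2$, with $s_i$ adjacent to every $x_j$ for $j\ne i$;
\item $4t$ vertices $y_{i,j}$, each adjacent only to $s_1$ and $s_2$.
\end{itemize}

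With $k$ colors, $s_i$ is forced to take the color of $x_i$, so $s_1,s_2$ always get different colors. Hence $P(G,k)=k!(k-2)^{4t}$, which is exactly the greedy bound for the order $x_1,\dots,x_k,s_1,s_2,y_{1,1},\dots$. So $\Pl(G,k)=P(G,k)$ immediately.

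With $k+1$ colors, $s_1,s_2$ may share a color, so $P(G,k+1)\ge (k+1)!k^{4t}$. The paper then takes lists $L(x_i)=C\cup B_1$, $L(s_i)=C\cup B_i$, and $L(y_{i,j})=C\cup D_i$. Here $|C|=k-1$, $|B_1|=|B_2|=2$, and $D_1,\dots,D_4$ are the four sets meeting each of $B_1,B_2$ in exactly one element. Under these lists, $s_1,s_2$ can coincide only on the $k-1$ colors of $C$, which contributes $(k-1)k!k^{4t}$. Every other configuration is exponentially smaller in $t$, so $P(G,L)<P(G,k+1)$ for large $t$.

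Your ``rigid structure'' instinct points the right way. What your outline lacks is this specific mechanism: distinctness that is forced at $k$ colors, can collapse at $k+1$ colors, and whose collapse the lists are designed to suppress.
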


Moreover, using the graphs constructed in Theorem~\ref{main1}, we are able to provide an infinite family of counterexamples based on any graph $G$ with $P(G,k)=\Pl(G,k)>0$ when $k\ge 3$.

\def \setnp {{\mathcal G}^{np}}

\begin{theorem}~\label{main2}
For any integer $k\ge 3$ and graph $G$ with $P(G,k)=\Pl(G,k)>0$, 
there exists 
an infinite set $\setnp(G)$
	such that for each graph 
	$G'\in \setnp(G)$,
	$P(G',k)=\Pl(G',k)>0$ but
$P(G',k+1)>\Pl(G',k+1)$.
\end{theorem}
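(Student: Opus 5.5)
We take Theorem~\ref{main1} as given: for each $k\ge3$ there is an infinite family $\mathcal F_k$ of graphs $F$ with $P(F,k)=\Pl(F,k)>0$ and $P(F,k+1)>\Pl(F,k+1)$. We build $G'$ by gluing such a gadget to $G$ in the simplest way that lets counts multiply. The cleanest choice is the disjoint union $G'=G\cup F$ with $F\in\mathcal F_k$. If connectedness is wanted, we instead identify a vertex of $G$ with a vertex of $F$, or glue along a clique $K_r$ with $r\le$ the clique number of both graphs. The infinite set is $\setnp(G)=\{G\cup F: F\in\mathcal F_k\}$. These graphs are pairwise non-isomorphic for infinitely many $F$, because we may pass to an infinite subfamily with strictly increasing orders.

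\textbf{Multiplicativity.} For a disjoint union, colorings factor. Hence $P(G\cup F,k)=P(G,k)P(F,k)$, and for every list assignment $L$ we have $P(G\cup F,L)=P(G,L|_G)P(F,L|_F)$. Since $L$ restricts independently to the two parts, minimizing over $L$ gives $\Pl(G\cup F,k)=\Pl(G,k)\Pl(F,k)$. At $k$ this yields $\Pl(G',k)=P(G,k)P(F,k)=P(G',k)>0$.

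At $k+1$ we have $\Pl(G',k+1)=\Pl(G,k+1)\Pl(F,k+1)\le P(G,k+1)\Pl(F,k+1)$. Because $P(G,k+1)\ge P(G,k)>0$ and $\Pl(F,k+1)<P(F,k+1)$, this is strictly less than $P(G,k+1)P(F,k+1)=P(G',k+1)$.

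\textbf{Gluing at a vertex.} If we glue at one vertex, we use $P(G',k)=P(G,k)P(F,k)/k$. For the list function we need $\Pl(G',k)\ge\Pl(G,k)\Pl(F,k)/k$ when both sides equal the chromatic values. This does not follow formally. We would have to show that, for each list $L$, the number of colorings of $G$ extending each color at the cut vertex is controlled. That requires extra structure, such as $G$ being symmetric in colors under the trivial list, which fails for general lists.

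For this reason we choose the disjoint union, or note that the statement never requires connectedness. The only step needing care is then the identity $\Pl(G\cup F,k)=\Pl(G,k)\Pl(F,k)$, which is immediate from independence of the lists on the two components. The real obstacle is entirely contained in Theorem~\ref{main1}.
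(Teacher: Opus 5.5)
Your proof is correct, but it takes a different route from the paper.

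\textbf{The paper's route.} The paper builds $G'=H_{k,t}$ by identifying a vertex of $G$ with $x_k$ of the specific gadget $G_{k,t}$. This requires two extra ingredients.
\begin{itemize}
\item For the strict inequality at $k+1$, it uses Lemma~\ref{lemm}, the sub-multiplicativity $\Pl(G\cup_1 H,k)\le \Pl(G,k)\Pl(H,k)/k$. This is proved by averaging $\sum_i g_i h_{\sigma(i)}$ over all permutations $\sigma$ of the colors at the cut vertex.
\item For equality at $k$, it uses a gadget-specific greedy count. Under any $k$-list assignment, every $L$-coloring of $G$ extends in at least $(k-1)!(k-2)^{4t}=P(G_{k,t},k)/k$ ways, by coloring $x_1,\dots,x_{k-1},s_1,s_2$ and then the $y_{i,j}$.
\end{itemize}

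\textbf{What your route buys.} Your disjoint union makes both directions immediate from exact multiplicativity, $\Pl(G\cup F,k)=\Pl(G,k)\Pl(F,k)$. This identity holds because the restrictions of $L$ to the two components range independently and all counts are nonnegative. Your argument also works for any family supplied by Theorem~\ref{main1}, not only $G_{k,t}$.

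\textbf{What the paper's route buys.} $G'$ is connected whenever $G$ is, so the gadget is genuinely attached to $G$. The statement as written does not even require $G'$ to contain $G$: $\{G_{k,t}: t\ge t_0\}$ alone would satisfy it. So without some attachment the theorem is essentially a restatement of Theorem~\ref{main1}. Your version at least keeps $G$ as an induced subgraph (a component).

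\textbf{Your remark on vertex-gluing.} Your diagnosis of why gluing at a vertex is hard is slightly off. The needed lower bound does not require any color-symmetry of $G$. The paper gets it entirely from the gadget side, since any color at the cut vertex extends greedily through $G_{k,t}$ in at least $P(G_{k,t},k)/k$ ways.

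\textbf{Your clique-gluing alternative.} Gluing along $K_r$ with $r\ge 2$ is genuinely risky. The paper shows that the analogue of Lemma~\ref{lemm} already fails for $r=2$, via $K_{2,12}\cup_2 K_3$. Since you did not rely on that option, it does not affect your proof.
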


Theorems~\ref{main1} and~\ref{main2} will be proven in Sections~\ref{sec2} and~\ref{sec3}, respectively, with further discussion provided in Section~\ref{sec4}.

\section{Proof of Theorem~\ref{main1}
\label{sec2}}
In this section, we prove Theorem~\ref{main1} by providing for each integer $k\ge 3$ an infinite family of graphs $G_{k,t}$ with $\Pl(G_{k,t},k)=P(G_{k,t},k)>0$ and $\Pl(G_{k,t},k+1)<P(G_{k,t},k+1)$.

Let $k$ be a fixed integer with $k\ge 3$, and let $t\in \N$. Now we give the construction of $G_{k,t}$. Let 
$$V(G_{k,t})=
X_k\cup S\cup Y_t,
	$$ 
where 
$$
\left\{
\begin{aligned}
&X_k= \{x_i : i\in [k]\},\\
&S = \{s_i : i\in [2]\},\\
&Y_t= \{y_{i,j} : i\in [4],\, j\in [t]\},
\end{aligned}
\right.
$$
and 
\aln{}{
E(G_{k,t})=&\{x_ix_j:i,j\in [k], i\neq j\} \cup \{s_ix_j: i\in [2], j\in [k], i\neq j\} \nonumber\\
&\cup \{y_{i,j} s_r: i\in [4], j\in [t], r\in [2]\}.\nonumber
}

\begin{figure}[htbp]
\centering
\begin{tikzpicture}[
 scale=0.75,
    transform shape,
    x=1cm,
    y=1cm,
    vertex/.style={circle,draw=black,thick,minimum size=3mm,
        inner sep=0pt,font=\small},
    xvertex/.style={vertex,fill=blue!18},
    svertex/.style={vertex,fill=orange!25},
    yvertex/.style={vertex,fill=green!18,minimum size=3mm},
    xedge/.style={black,thick},
    sedge/.style={orange!70!black,thick},
    yedge/.style={green!50!black,line width=.9pt,opacity=.72},
    group/.style={draw=black!45,dashed,rounded corners=4pt,
        inner sep=8pt},
    grouplabel/.style={font=\bfseries\large}
]

\node[xvertex,label=left:{$x_1$}] (x1) at (0, 2.6) {};
\node[xvertex,label=left:{$x_2$}] (x2) at (0.9, 0) {};
\node[xvertex,label=left:{$x_3$}] (x3) at (0,-2.6) {};

\node[svertex,label=above:{$s_1$}] (s1) at (4, 1.3) {};
\node[svertex,label=below:{$s_2$}] (s2) at (4,-1.3) {};

\node[yvertex,label=right:{$y_{1,1}$}] (y111) at (8, 3.85) {};
\node[yvertex,label=right:{$y_{2,1}$}] (y112) at (8, 2.75) {};
\node[yvertex,label=right:{$y_{3,1}$}] (y121) at (8, 1.65) {};
\node[yvertex,label=right:{$y_{4,1}$}] (y122) at (8, 0.55) {};
\node[yvertex,label=right:{$y_{1,2}$}] (y211) at (8,-0.55) {};
\node[yvertex,label=right:{$y_{2,2}$}] (y212) at (8,-1.65) {};
\node[yvertex,label=right:{$y_{3,2}$}] (y221) at (8,-2.75) {};
\node[yvertex,label=right:{$y_{4,2}$}] (y222) at (8,-3.85) {};

\begin{scope}[on background layer]
    \draw[xedge] (x1)--(x2)--(x3)--(x1);

    \draw[sedge] (s1)--(x2);
    \draw[sedge] (s1)--(x3);
    \draw[sedge] (s2)--(x1);
    \draw[sedge] (s2)--(x3);

    \foreach \y in {y111,y112,y121,y122,y211,y212,y221,y222} {
        \draw[yedge] (s1)--(\y);
        \draw[yedge] (s2)--(\y);
    }
\end{scope}

\begin{scope}[on background layer]
    \node[group,fit=(x1)(x2)(x3), inner xsep=18pt,
        inner ysep=12pt] (Xbox) {};
    \node[group,fit=(s1)(s2),    inner xsep=18pt,
        inner ysep=12pt] (Sbox) {};
    \node[group,fit=(y111)(y222),    inner xsep=20pt,
        inner ysep=12pt] (Ybox) {};
\end{scope}
\node[grouplabel,below=10pt of Xbox] {$X_k$};
\node[grouplabel,below=10pt of Sbox] {$S$};
\node[grouplabel,below=10pt of Ybox] {$Y_t$};
\end{tikzpicture}
\caption{The graph $G_{3,2}$.}
\label{fig:H32}
\end{figure}

An example for $G_{3,2}$ is as shown in Figure~\ref{fig:H32}.
It is clear that $|V(G_{k,t})|=k+2+4t$, and $G_{k,t}[X_k]$ and all $G_{k,t}[X_k\cup \{s_i\}\setminus \{x_i\}]$ with $i\in [2]$ are isomorphic to $K_k$.

In the following, we shall prove that each $G_{k,t}$ is a graph satisfying Theorem~\ref{main1} whenever $t$ is sufficiently large.
We first show that $\Pl(G_{k,t},k)=P(G_{k,t},k)$.

\begin{lemma}\label{prop:equality-at-k}
For any $k\ge 3$ and $t\in \N$, 
\aln{ine1-1}
{
    \Pl(G_{k,t},k)=P(G_{k,t},k)=k!(k-2)^{4t}.
}
\end{lemma}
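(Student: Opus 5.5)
We first compute $P(G_{k,t},k)$ directly, then show that every $k$-list assignment $L$ satisfies $P(G_{k,t},L)\ge k!(k-2)^{4t}$. Since (\ref{listcolor}) gives $\Pl(G_{k,t},k)\le P(G_{k,t},k)$, these two facts together yield (\ref{ine1-1}).

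\emph{Chromatic polynomial.} The clique $X_k$ can be properly $k$-colored in $k!$ ways. The vertex $s_i$ ($i\in[2]$) is adjacent to every $x_j$ with $j\neq i$, so it must receive the color of $x_i$. Since $x_1$ and $x_2$ have different colors, $s_1$ and $s_2$ get two distinct colors. Each $y_{i,j}$ is adjacent only to $s_1$ and $s_2$, so it then has exactly $k-2$ choices. Hence $P(G_{k,t},k)=k!(k-2)^{4t}$.

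\emph{Lower bound for lists.} Fix a $k$-list assignment $L$. Every $L$-coloring of $H=G_{k,t}[X_k\cup S]$ extends in exactly $\prod_{y\in Y_t}|L(y)\setminus\{\theta(s_1),\theta(s_2)\}|$ ways. Each factor is at least $k-2$, and it equals $k-1$ or $k$ when $\theta(s_1)$ or $\theta(s_2)$ lies outside $L(y)$. So it suffices to show $P(H,L)\ge k!$. Note that $H$ is $K_{k+2}$ minus the edges $s_1x_1$, $s_2x_2$ and $s_1s_2$, i.e.\ minus a path $x_1s_1s_2x_2$ on three edges.

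The key step, and the main obstacle, is proving $P(H,L)\ge k!=P(H,k)$ for every $k$-list assignment. I would try a case analysis on the lists of $s_1$ and $s_2$, arguing that $H$ behaves like two copies of $K_k$ sharing $k-2$ vertices. Write $W=\{x_3,\dots,x_k\}$, so that $K=H[W]$ is a clique on $k-2$ vertices. For an $L$-coloring $\phi$ of $K$, let $L_\phi(v)=L(v)\setminus\phi(W)$ for each $v\in\{x_1,x_2,s_1,s_2\}$; each such list has size at least $2$. The remaining graph on $x_1,x_2,s_1,s_2$ is the $4$-cycle $x_1x_2s_1\dots$; indeed its edges are $x_1x_2$, $x_1s_2$, $x_2s_1$ and none else, so it is the path $s_2x_1x_2s_1$. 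A path with lists of size at least $2$ has at least $2$ colorings, with equality only when all lists have size exactly $2$ and are suitably aligned. This crude bound gives only $2\cdot P(K,L|_W)\ge 2(k-2)!$, which is too weak, so I need a finer count.

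The finer count I would attempt uses Donner-style or greedy ordering. Order the vertices as $x_1,x_2,x_3,\dots,x_k,s_1,s_2$ and count colorings of $X_k$ first. Since $X_k$ is a clique with $k$-lists, the classical fact that complete graphs satisfy $\Pl(K_k,L)\ge k!$ (provable by induction, removing a color) gives at least $k!$ colorings of $X_k$. The trouble is that a coloring of $X_k$ need not extend to $s_1$, since $L(s_1)$ may avoid the color set of $X_k\setminus\{x_1\}$. To handle this, I would instead prove the statement directly by induction on $k$, removing a common color: pick a color $c$ and partition the colorings of $H$ according to which vertex or independent pair among $\{x_1,s_1\}$, $\{x_2,s_2\}$, $\{s_1,s_2\}$ receives $c$. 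Each part then reduces to a smaller instance of the same shape, or to a clique, with lists of size $k-1$. The goal is to show the sum of the counts over these parts is at least $k\cdot(k-1)!$, using Hall-type arguments when $c$ is absent from some lists. Making this induction rigorous, in particular choosing $c$ so that enough parts are nonempty, is where I expect the real work to lie.
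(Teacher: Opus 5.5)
Your computation of $P(G_{k,t},k)$ is correct, and so is your reduction: every $L$-coloring $\theta$ of $H=G_{k,t}[X_k\cup S]$ extends to $\prod_{y\in Y_t}|L(y)\setminus\{\theta(s_1),\theta(s_2)\}|\ge (k-2)^{4t}$ colorings of $G_{k,t}$, so it suffices to show $P(H,L)\ge k!$. However, that inequality, which you call the key step, is never proved. The induction on $k$ by removing a common color $c$ is only outlined. You explicitly leave open the part that would make it work (choosing $c$ so that enough parts are nonempty, the Hall-type arguments). As written, the proposal does not establish $P_{\ell}(G_{k,t},k)\ge k!(k-2)^{4t}$.

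The obstacle that led you away from the direct count is not real. You worry that a coloring of $X_k$ need not extend to $s_1$. But in $H$ the vertex $s_1$ has exactly $k-1$ neighbors, namely $x_2,\dots,x_k$, and $|L(s_1)|=k$. So whatever colors appear on $X_k$ and whatever $L(s_1)$ is, at least one color of $L(s_1)$ is free. If $L(s_1)$ avoids the colors on $X_k\setminus\{x_1\}$, that only gives $s_1$ more choices. The same holds for $s_2$, and since $s_1s_2\notin E(G_{k,t})$, the two choices do not interact.

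So color greedily in the order $x_1,\dots,x_k,s_1,s_2,y_{1,1},\dots,y_{4,t}$. Each vertex has at least $|L(v)|$ minus its number of earlier neighbors as available colors: at least $k-i+1$ for $x_i$, at least $1$ for each $s_i$, and at least $k-2$ for each $y_{i,j}$. This gives $P(G_{k,t},L)\ge k!\cdot 1\cdot 1\cdot (k-2)^{4t}$ directly. This one-line greedy argument is exactly the paper's proof. No induction, Hall-type argument, or analysis of the path $s_2x_1x_2s_1$ is needed.
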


\begin{proof}
We shall prove that $P(G_{k,t},k)=k!(k-2)^{4t}$ by counting the number of proper $k$-colorings of $G_{k,t}$ in the order of $x_1,x_2,\dots, x_k, s_1,s_2,y_{1,1}, y_{1,2},\dots,y_{4,t}$. It is easy to see that we have $k-i+1$ available colors for each $x_{i}$. Since $k\ge 3$ and each $s_i$ is adjacent to $k-1$ vertices in $X_k$, we then have exactly one available color for each $s_i$, which is the color we assign to $x_i$. Moreover, in any proper $k$-coloring of $G_{k,t}$, the colors of all $x_i$ are pairwise distinct as all $x_i$ together forms a clique. Thus $s_1$  and $s_2$ have distinct colors, which leaves exactly $k-2$ available colors for each $y_{i,j}$.
Hence $P(G_{k,t},k)=k!(k-2)^{4t}$.

Now it remains to show that  $\Pl(G_{k,t},k)=k!(k-2)^{4t}$. Since $\Pl(G_{k,t},k)\le P(G_{k,t},k)$, we need only to prove that $P(G_{k,t}, L)\ge k!(k-2)^{4t}$
for all $k$-list assignments $L$ of $G_{k,t}$.

Let $L$ be any 
$k$-list assignment of $G_{k,t}$.
We shall give a lower bound for the number of $L$-colorings in the order of $x_1,x_2,\dots, x_k, s_1,s_2,y_{1,1}, y_{1,2},\dots,y_{4,t}$. 
It is easy to see that we have at least $k-i+1$ available colors for each $x_{i}$, at least one available color for each $s_i$, and at least $k-2$ colors for each $y_{i,j}$, which implies $P(G_{k,t}, L)\ge k!(k-2)^{4t}$. 
\end{proof}

Next, we establish a lower bound for $P(G_{k,t},k+1)$.

\begin{lemma}\label{lem:ordinary-lower-bound}
For any $k\ge 3$ and $t\in \N$, 
\aln{ine1-2}
{
    P(G_{k,t},k+1)\ge (k+1)!k^{4t}.
}
\end{lemma}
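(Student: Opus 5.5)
Our plan is to bound the number of proper $(k+1)$-colorings from below by counting only a convenient subfamily: those in which $s_1$ and $s_2$ receive the same color. We again color in the order $x_1,\dots,x_k,s_1,s_2,y_{1,1},\dots,y_{4,t}$, as in the proof of Lemma~\ref{prop:equality-at-k}.

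First, the clique $X_k$ can be colored from $[k+1]$ in exactly $(k+1)_k=(k+1)!$ ways, and all $k$ colors on $X_k$ are pairwise distinct. Exactly one color $c\in[k+1]$ is then unused on $X_k$.

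Next consider $s_1$, whose neighbours in $X_k$ are $x_2,\dots,x_k$. Its forbidden colors are the $k-1$ distinct colors of $x_2,\dots,x_k$, so it has exactly two admissible colors, namely $\theta(x_1)$ and $c$. Likewise, $s_2$ may receive $\theta(x_2)$ or $c$. Since $s_1s_2\notin E(G_{k,t})$, we may choose $\theta(s_1)=\theta(s_2)=c$, and this gives one extension for each coloring of $X_k$.

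Finally, each $y_{i,j}$ is adjacent only to $s_1$ and $s_2$, which share the single color $c$. Hence each $y_{i,j}$ has exactly $k$ available colors, chosen independently, giving $k^{4t}$ choices. Multiplying the three stages, the colorings with $\theta(s_1)=\theta(s_2)$ number $(k+1)!\,k^{4t}$, and this is a lower bound for $P(G_{k,t},k+1)$.

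The only point needing care is checking that $c$ is admissible for both $s_1$ and $s_2$ at the same time. This holds because neither $s_i$ is adjacent to the other, and $c$ appears nowhere on $X_k$. Beyond that the argument is a direct multiplication count, and no earlier theorem is needed.
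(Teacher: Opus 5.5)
Your proof is correct and essentially the paper's argument: both lower-bound $P(G_{k,t},k+1)$ by the colorings in which $s_1$ and $s_2$ share a color, which number exactly $(k+1)!\,k^{4t}$. The only difference is the order. The paper colors $s_1,s_2$ first ($k+1$ choices) and then $X_k$ in $k!$ ways, since every $x_i$ is adjacent to some $s_j$. You color $X_k$ first and force the common color to be the unique color missing from $X_k$.
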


\begin{proof}
We shall also prove by establishing a lower bound for the number of proper $(k+1)$-colorings of $G_{k,t}$ in the order of $s_1,s_2,x_1,x_2,\dots, x_k, y_{1,1}, y_{1,2},\dots,y_{4,t}$. We can first assign a same color to both $s_1$ and $s_2$, for which we have $k+1$ choices. Then we have $k-i+1$ available colors for each $x_{i}$, and $k$ available colors for each $y_{i,j}$. Hence
(\ref{ine1-2}) holds.
\end{proof}

In the following, we shall determine an upper bound for $\Pl(G_{k,t},k+1)$ by finding a special ($k+1$)-list assignment for $G_{k,t}$.

Let $C,B_1,B_2$ be pairwise disjoint sets where $C=\{c_1,c_2,\dots,c_{k-1}\}$, $B_1=\{b_{1,1},b_{1,2}\}$ and 
$B_2=\{b_{2,1},b_{2,2}\}$. Then let $D_1=\{b_{1,1},b_{2,1}\}$, $D_2=\{b_{1,1},b_{2,2}\}$, $D_3=\{b_{1,2},b_{2,1}\}$, and $D_4=\{b_{1,2},b_{2,2}\}$.

Now let $L$ be the $(k+1)$-list assignment of $G_{k,t}$ such that
\aln{def-L}
{
\left\{
\begin{aligned}
&L(x_i) =C\cup B_1~\text{for all}~i\in [k],\\
&L(s_i) = C\cup B_i~\text{for all}~i\in [2],\\
&L(y_{i,j})= C\cup D_{i}~\text{for all}~i\in [4],\, j\in [t].
\end{aligned}
\right.
}
Then it is easy to see that $L$ is a ($k+1$)-list assignment, hence 
$
\Pl(G_{k,t},k+1)\le P(G_{k,t},L).
$
We shall further determine $P(G_{k,t},L)$ by the next two lemmas.

\begin{lemma}\label{lem:dominant-product}
Let $L|_{X_k\cup S}$ be the $(k+1)$-list assignment of $G_{k,t}[X_k\cup S]$ which is the restriction of $L$ to $X_k\cup S$. Then for each $L|_{X_k\cup S}$-coloring $\theta$ of $G_{k,t}[X_k\cup S]$, the number of $L$-colorings $c$ of $G_{k,t}$ with $c|_{X_k\cup S}=\theta$ is 
$$
\left\{
\begin{alignedat}{2}
& k^{4t}, 
&& \qquad \text{if } \theta(s_1)=\theta(s_2),\\
& (k-1)^{4t}, 
&& \qquad \text{if } \theta(s_1)\neq\theta(s_2)~\text{and}~
   \theta(s_1),\theta(s_2)\in C,\\
& \bigl(k(k-1)\bigr)^{2t}, 
&& \qquad \text{if } \theta(s_1)\neq\theta(s_2)~\text{and}~
   |\{\theta(s_1),\theta(s_2)\}\cap C|=1,\\
& \bigl((k-1)k^2(k+1)\bigr)^t, 
&& \qquad \text{if } \theta(s_1)\neq\theta(s_2)~\text{and}~
   |\{\theta(s_1),\theta(s_2)\}\cap C|=0.
\end{alignedat}
\right.
$$
\end{lemma}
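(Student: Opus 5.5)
Once $\theta$ is fixed on $X_k\cup S$, the vertices of $Y_t$ form an independent set, and each $y_{i,j}$ is adjacent only to $s_1$ and $s_2$. So the extensions of $\theta$ factor as a product over the $4t$ vertices $y_{i,j}$. The number of choices for $y_{i,j}$ is
\[
|L(y_{i,j})\setminus\{\theta(s_1),\theta(s_2)\}| = (k+1)-|\{\theta(s_1),\theta(s_2)\}\cap (C\cup D_i)|.
\]
The count depends on $j$ only through $i$, so the total number of extensions is $\prod_{i=1}^4 m_i^{\,t}$, where $m_i=(k+1)-|\{\theta(s_1),\theta(s_2)\}\cap(C\cup D_i)|$. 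It remains to compute $m_1,\dots,m_4$ in each of the four cases, using $\theta(s_1)\in C\cup B_1$, $\theta(s_2)\in C\cup B_2$, and the fact that each $D_i$ contains exactly one element of $B_1$ and one of $B_2$.

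\textbf{Case $\theta(s_1)=\theta(s_2)$.} The common colour lies in $(C\cup B_1)\cap(C\cup B_2)=C$. It therefore lies in every $L(y_{i,j})$, so $m_i=k$ for all $i$, giving $k^{4t}$.

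\textbf{Case both colours in $C$ and distinct.} Both colours lie in every list, so $m_i=k-1$, giving $(k-1)^{4t}$.

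\textbf{Case exactly one colour in $C$.} Say $\theta(s_1)\in C$ and $\theta(s_2)=b_{2,r}\in B_2$; the symmetric case $\theta(s_2)\in C$, $\theta(s_1)\in B_1$ is analogous. The colour $b_{2,r}$ lies in exactly two of $D_1,\dots,D_4$: the pairs $\{D_1,D_3\}$ for $b_{2,1}$ and $\{D_2,D_4\}$ for $b_{2,2}$, and similarly $\{D_1,D_2\}$ or $\{D_3,D_4\}$ for an element of $B_1$. So two indices have $m_i=k-1$ and two have $m_i=k$, giving $(k(k-1))^{2t}$.

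\textbf{Case neither colour in $C$.} Here $\theta(s_1)=b_{1,p}$ and $\theta(s_2)=b_{2,q}$. Exactly one $D_i$ equals $\{b_{1,p},b_{2,q}\}$, and for it $m_i=k-1$. Exactly one $D_i$ is disjoint from $\{b_{1,p},b_{2,q}\}$, and for it $m_i=k+1$. The remaining two $D_i$ each contain exactly one of the two colours, so $m_i=k$. This gives $((k-1)k^2(k+1))^t$.

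There is no real obstacle; the only care needed is the bookkeeping of the incidence structure of $D_1,\dots,D_4$ relative to $B_1,B_2$. In effect, the $D_i$ are indexed by $B_1\times B_2$, and that product structure is what makes each count uniform in the choice of $p,q,r$. One should also note explicitly that $\theta(s_1)=\theta(s_2)$ forces the common colour into $C$, so the first case needs no further splitting.
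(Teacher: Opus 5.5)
Your proposal is correct and follows essentially the same route as the paper: you factor the extensions over the independent, $\{s_1,s_2\}$-adjacent vertices of $Y_t$ as $\bigl(\prod_{i=1}^{4}|(C\cup D_i)\setminus Z|\bigr)^t$, then evaluate the four factors in each case. The paper justifies its last two cases by fixing representatives ``without loss of generality''; your observation that the $D_i$ are indexed by $B_1\times B_2$ makes that uniformity explicit.
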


\begin{proof}
Let $\theta$ be any $L|_{X_k\cup S}$-coloring of $G_{k,t}[X_k\cup S]$, and let $Z=\{\theta(s_i): i\in[2]\}$. Then $1\le |Z|\le 2$. To extend $\theta$ into an $L$-coloring of $G_{k,t}$, it remains to assign colors to the vertices in $Y_t$.
For each
$y_{i,j}\in Y_t$, we have $|(C\cup D_i)\setminus Z|$ available colors to choose. 
Then the number of $L$-colorings of $G_{k,t}$ extending $\theta$ is 
\aln{ineq91}{
\left(\prod_{i=1}^{4}\left|(C\cup D_i)\setminus Z\right|\right)^t.
}

If $|Z|=1$
(i.e., $\theta(s_1)= \theta(s_2)$), 
then $\theta(s_1)=\theta(s_2)=c_s\in C$ for some $s\in [k-1]$.
Thus $|(C\cup D_i)\setminus Z|=k$ for all $i\in [4]$, implying that 
\aln{ineq3}{
	\prod_{i=1}^{4}|(C\cup D_i)\setminus Z|
	= k^{4}.
}
By (\ref{ineq91}), 
the result holds
when $\theta(s_1)=\theta(s_2)$.

Now consider the case $|Z|=2$
(i.e., $\theta(s_1)\ne \theta(s_2)$.)
Let $q=|Z\cap C|$ and $p=|Z|-q$. Then $p$ is the number of colors in $Z\cap (B_1\cup B_2)$.
Moreover, either $(q,p)=(2,0)$, or 
$(q,p)=(1,1)$, or $(q,p)=(0,2)$. We shall analyze each of the three cases below.

\noindent{\bf Case 1}: 
$(q,p)=(2,0)$.

In this case, $\theta(s_1)\neq\theta(s_2)$ and $\theta(s_1),\theta(s_2)\in C$.
Then
$\left|(C\cup D_i)\setminus Z\right|=k-1$ for all $i\in [4]$,
implying that 
\aln{ineq3-c1}
{
	\prod_{i=1}^{4}|(C\cup D_i)\setminus Z|
= (k-1)^{4}.
}

\noindent{\bf Case 2}: 
$(q,p)=(1,1)$.

In this case, $\theta(s_1)\neq\theta(s_2)$ and $
|\{\theta(s_1),\theta(s_2)\}\cap C|=1$.
Without loss of generality,
assume that $Z=\{c_1,b_{1,1}\}$.
Then,
\aln{}
{
	\left \{
	\begin{array}{l}
		|(C\cup D_1)\setminus Z|
		=|(C\cup D_2)\setminus Z|
		=k-1;\\
		|(C\cup D_3)\setminus Z|
		=|(C\cup D_4)\setminus Z|
		=k.
	\end{array}
	\right.
}
Thus
\aln{ineq3-c2}
{
	\prod_{i=1}^{4}|(C\cup D_i)\setminus Z|
	= k^2(k-1)^2.
}

\noindent{\bf Case 3}: 
$(q,p)=(0,2)$. 

In this case, $\theta(s_1)\neq\theta(s_2)$ and $
|\{\theta(s_1),\theta(s_2)\}\cap C|=0$.
Without loss of generality,
assume that $Z=\{b_{1,1},b_{2,1}\}$.
Then,
\aln{}
{
	\left \{
	\begin{array}{l}
	|(C\cup D_1)\setminus Z|=k-1;\\
	|(C\cup D_2)\setminus Z|
	=|(C\cup D_3)\setminus Z|
	=k;\\
	|(C\cup D_4)\setminus Z|=k+1.
	\end{array}
\right.
}
Thus, 
\aln{ineq3-c3}
{
	\prod_{i=1}^{4}|(C\cup D_i)\setminus Z|
	= (k-1)k^2(k+1).
}

By (\ref{ineq91})
and the results in 
the three cases above, 
the lemma is proven.
\end{proof}

\begin{lemma}\label{lem:list-upper-bound}
For any $k\ge 3$ and $t\in \N$, 
let $L$ be the $(k+1)$-list assignment of $G_{k,t}$
defined in (\ref{def-L}). Then 
\aln{ine1-6}
{
    P(G_{k,t},L)=&
    (k-1)k!k^{4t}+3(k-2)(k-1)!(k-1)^{4t+1}+\big(4(k-1)k!\nonumber\\
    &+6(k-1)(k-1)!\big)\bigl(k(k-1)\bigr)^{2t}+8k!\bigl((k-1)k^2(k+1)\bigr)^t.
}
\end{lemma}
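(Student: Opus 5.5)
The plan is to combine Lemma~\ref{lem:dominant-product} with an exact enumeration of the $L|_{X_k\cup S}$-colorings $\theta$ of $G_{k,t}[X_k\cup S]$, sorted into the four types appearing there. Then
\[
P(G_{k,t},L)=\sum_{\theta}(\text{number of extensions of }\theta),
\]
and Lemma~\ref{lem:dominant-product} reduces this to $N_1k^{4t}+N_2(k-1)^{4t}+N_3\bigl(k(k-1)\bigr)^{2t}+N_4\bigl((k-1)k^2(k+1)\bigr)^t$. Here $N_1,\dots,N_4$ count the colorings $\theta$ of each type. The target is to show $N_1=(k-1)k!$, $N_2=3(k-2)(k-1)!(k-1)$, $N_3=4(k-1)k!+6(k-1)(k-1)!$ and $N_4=8k!$.

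First, describe $\theta$ structurally. Since $X_k$ is a $k$-clique with common list $C\cup B_1$ of size $k+1$, exactly one color $m\in C\cup B_1$ is unused on $X_k$. This gives $k+1$ choices of $m$ and $k!$ bijections, so $(k+1)k!$ colorings of $X_k$. The vertex $s_1$ sees $x_2,\dots,x_k$ and has list $C\cup B_1$, so $\theta(s_1)\in\{\theta(x_1),m\}$. The vertex $s_2$ sees $x_1,x_3,\dots,x_k$ and has list $C\cup B_2$. Since $B_2$ is disjoint from all colors on $X_k$, its options are both colors of $B_2$, together with $\theta(x_2)$ if $\theta(x_2)\in C$, and $m$ if $m\in C$. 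Since $s_1s_2\notin E(G_{k,t})$, the choices for $s_1$ and $s_2$ are independent.

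Next, split according to whether $m\in C$ or $m\in B_1$, and according to which of $\theta(x_1),\theta(x_2)$ lie in $C$ and which in $B_1$. The number of bijections realising each pattern is a routine product of falling factorials; for example, with $m\in B_1$ both $b_{1,\cdot}$ colors other than $m$ must appear, and so on. In each subcase, list the pairs $(\theta(s_1),\theta(s_2))$ and classify each pair by whether the colors are equal, and by $|\{\theta(s_1),\theta(s_2)\}\cap C|$.

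\begin{itemize}
\item Equality $\theta(s_1)=\theta(s_2)$ can only occur when $\theta(s_1)=\theta(s_2)=m\in C$, because $\theta(x_1)\neq\theta(x_2)$ and $B_1\cap B_2=\emptyset$. This gives $N_1=(k-1)k!$ immediately.
\item Type 4 (no color in $C$) forces $\theta(s_1)\in B_1$, meaning $\theta(s_1)$ is whichever of $\theta(x_1),m$ lies in $B_1$, and $\theta(s_2)\in B_2$. Counting the colorings with $\theta(x_1)\in B_1$ or $m\in B_1$ and multiplying by $2$ should yield $8k!$.
\end{itemize}

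Types 2 and 3 are obtained similarly. As a sanity check, $N_1+N_2+N_3+N_4$ must equal the total number of $\theta$, which can be computed independently by summing, over the colorings of $X_k$, the product of the numbers of choices for $s_1$ and $s_2$.

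The main obstacle is the bookkeeping in types 2 and 3. Here several subcases, such as $m\in C$ versus $m\in B_1$ and $\theta(x_2)\in C$ versus $\theta(x_2)\in B_1$, contribute with different falling-factorial counts. These must be combined into exactly the stated coefficients $3(k-2)(k-1)!(k-1)$ and $4(k-1)k!+6(k-1)(k-1)!$. I would tabulate the subcases explicitly, indexed by the positions of $m,\theta(x_1),\theta(x_2)$ relative to $C$ and $B_1$, and simplify at the end. The total-count identity above serves as the check against omissions or double counting.
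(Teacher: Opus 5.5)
Your route is valid but runs in the opposite direction from the paper's. You color the clique $X_k$ first, parametrized by the missing color $m$ and the positions of $\theta(x_1),\theta(x_2)$. You then read off the independent options for $s_1,s_2$ and sort the resulting pairs by type.

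The paper instead fixes $\theta(s_1),\theta(s_2)$ first, which determines the type at once. It then colors $X_k$ greedily, leaving last the vertices $x_1$ (and $x_2$) that see only one of $s_1,s_2$. These have exactly $2$ completions when $\theta(s_2)\in B_2$, and exactly $3$ when both $\theta(s_1),\theta(s_2)\in C\cup B_1$. So each coefficient is a single product, with no case table: $N_2=(k-1)(k-2)\cdot(k-1)!\cdot 3$ and $N_3=2(k-1)\cdot k!\cdot 2+2(k-1)\cdot(k-1)!\cdot 3$.

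Your version needs seven configurations, but it gives the total number of $\theta$ as a free consistency check. The table you postpone does close. For $m\in C$ and $(\theta(x_1),\theta(x_2))$ in $C\times C$, $C\times B_1$, $B_1\times C$, $B_1\times B_1$, the numbers of pairs of types $1,2,3,4$ per coloring of $X_k$ are $(1,3,4,0)$, $(1,1,4,0)$, $(1,1,4,2)$, $(1,0,3,2)$. For $m\in B_1$ and $C\times C$, $C\times B_1$, $B_1\times C$, they are $(0,1,3,2)$, $(0,0,2,2)$, $(0,0,2,4)$. Weighted by the number of colorings of $X_k$ in each configuration, these sum to the stated $N_2$ and $N_3$.

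There is one slip in your type-4 sketch that you need to fix. It is not true that exactly one of $\theta(x_1),m$ lies in $B_1$. If $m\in B_1$ and $\theta(x_1)$ is the other color of $B_1$, then both options for $s_1$ are in $B_1$. This happens for $2(k-1)!$ colorings of $X_k$ (the last configuration above), and each such coloring contributes $4$ type-4 pairs, not $2$. If you count colorings with ``$\theta(x_1)\in B_1$ or $m\in B_1$'' as a union and double, you get $8k!-4(k-1)!$. What you need is twice the \emph{sum} of the number of colorings of $X_k$ with $\theta(x_1)\in B_1$ and the number with $m\in B_1$, namely $2(2\cdot k!+2\cdot k!)=8k!$. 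Your proposed total-count check would have exposed the discrepancy.
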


\begin{proof}
We shall prove by analyzing all possible $L|_{X_k\cup S}$-colorings of $G_{k,t}[X_k\cup S]$ and combine them with Lemma~\ref{lem:dominant-product}.

We first count the number of $L|_{X_k\cup S}$-colorings $\theta$ of $G_{k,t}[X_k\cup S]$ such that $\theta(s_1)=\theta(s_2)$. Suppose $\theta(s_1)=\theta(s_2)=c$. Then $c\in C$ and $c$ can be any one of the $k-1$ colors in $C$. According to the construction of $G_{k,t}$, each vertex in $X_k$ is adjacent to at least one vertex in $S$, which implies that no vertex in $X_k$ can receive color $c$. Therefore, all vertices in $X_k$ have all colors in $(C\setminus \{c\})\cup B_1$ to use, where $|(C\setminus \{c\})\cup B_1|=k$. Since $X_k$ is a clique of size $k$, we have exactly $k!$
ways to assign colors to $X_k$. Thus in this case, the number of 
$L|_{X_k\cup S}$-colorings $\theta$ of $G_{k,t}[X_k\cup S]$ such that $\theta(s_1)=\theta(s_2)$ is $(k-1)k!$. Then by Lemma~\ref{lem:dominant-product},
the number of $L$-colorings $\theta'$ of $G_{k,t}$ such that $\theta'(s_1)=\theta'(s_2)$ is  $(k-1)k! k^{4t}$.

For the remaining $L|_{X_k\cup S}$-colorings $\theta$ of
$G_{k,t}[X_k\cup S]$, $\theta(s_1)\neq\theta(s_2)$ and we shall discuss each of the three cases below.

\noindent{\bf Case 1}: $\theta(s_1)\neq\theta(s_2)$ and $\theta(s_1),\theta(s_2)\in C$.

For this case, we count by coloring the vertices in the order
$s_1,s_2,x_3,x_4,\dots,x_k,x_1,x_2$.
There are $(k-1)(k-2)$ ways to color $s_1$ and $s_2$, followed by
$(k-1)(k-2)\cdots 2$ ways to color $x_3,x_4,\dots,x_k$, which leaves exactly one remaining color in $C\cup B_1$, say $q$.
We may then assign to $(x_1,x_2)$ any of the three ordered pairs
$(\theta(s_1),q)$, $(q,\theta(s_2))$, and
$(\theta(s_1),\theta(s_2))$.
Hence, the number of such colorings is
$3(k-1)(k-2)(k-1)!$.

\noindent{\bf Case 2}: $\theta(s_1)\neq\theta(s_2)$ and $|\{\theta(s_1),\theta(s_2)\}\cap C|=1$.

Suppose $\theta(s_1)\in C$ and $\theta(s_2)\in B_2$ first. 
Then we count the colorings in the order
$s_1,s_2,x_2,x_3,\dots,x_k,x_1$.
There are $2(k-1)$ ways to color $s_1$ and $s_2$, followed by
$k(k-1)\cdots 2$ ways to color $x_2,x_3,\dots,x_k$.
This leaves exactly one remaining color in $C\cup B_1$, say $q$.
We may then assign the color $q$ or $\theta(s_1)$ to $x_1$.
Hence, the number of such colorings is
$4(k-1)k!$.

Then we suppose $\theta(s_2)\in C$ and $\theta(s_1)\in B_1$.
We count the colorings  in the order
$s_1,s_2,x_3,x_4,\dots,x_k,x_1,x_2$.
There are $2(k-1)$ ways to color $s_1$ and $s_2$, followed by
$(k-1)(k-2)\cdots 2$ ways to color $x_3,x_4,\dots,x_k$,
which leaves exactly one remaining color in $C\cup B_1$, say $q$.
We may then assign to $(x_1,x_2)$ any of the three ordered pairs
$(\theta(s_1),q)$, $(q,\theta(s_2))$, and
$(\theta(s_1),\theta(s_2))$.
Hence, the number of such colorings is
$6(k-1)(k-1)!$.

\noindent{\bf Case 3}: $\theta(s_1)\neq\theta(s_2)$ and $|\{\theta(s_1),\theta(s_2)\}\cap C|=0$.

For this case, we count the colorings  in the order
$s_1,s_2,x_2,x_3,\dots,x_k,x_1$.
There are $2\times 2$ ways to color $s_1$ and $s_2$, followed by
$k(k-1)\cdots 2$ ways to color $x_2,x_3,\dots,x_k$, which leaves exactly one remaining color in $C\cup B_1$, say $q$.
We may then assign the color $q$ or $\theta(s_1)$ to $x_1$.
Hence, the number of such colorings is
$8k!$.

By Lemma~\ref{lem:dominant-product} and the three cases above, we have
$3(k-1)(k-2)(k-1)!(k-1)^{4t}+\big(4(k-1)k!+6(k-1)(k-1)!\big)\bigl(k(k-1)\bigr)^{2t}+8k!\bigl((k-1)k^2(k+1)\bigr)^t$ 
$L$-colorings $\theta'$ of $G_{k,t}$ such that $\theta'(s_1)\neq \theta'(s_2)$.

Hence the result is proven.
\end{proof}

Now we are able to show that $\Pl(G_{k,t},k+1)<P(G_{k,t},k+1)$ when $t$ is sufficiently large.

\begin{proposition}\label{thm:main}
For any integer $k\geq 3$, there is an integer $t_0=t_0(k)$ such that
$    \Pl(G_{k,t},k+1)<P(G_{k,t},k+1)
$ holds for 
all $t\geq t_0$.
\end{proposition}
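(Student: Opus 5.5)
We combine Lemma~\ref{lem:ordinary-lower-bound} with Lemma~\ref{lem:list-upper-bound}. Since $L$ is a $(k+1)$-list assignment, $\Pl(G_{k,t},k+1)\le P(G_{k,t},L)$, so it suffices to show that
\[
P(G_{k,t},L) < (k+1)!\,k^{4t}
\]
for all sufficiently large $t$.

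The leading term of $P(G_{k,t},L)$ in (\ref{ine1-6}) is $(k-1)k!\,k^{4t}$. Compare it with the lower bound $(k+1)!\,k^{4t}=(k+1)k!\,k^{4t}$. The difference is
\[
(k+1)!\,k^{4t}-(k-1)k!\,k^{4t}=2k!\,k^{4t},
\]
which is a positive multiple of $k^{4t}$. It therefore remains to show that the other three terms of (\ref{ine1-6}) together are $o(k^{4t})$. Dividing everything by $k^{4t}$, we need
\[
3(k-2)(k-1)!(k-1)\Bigl(\tfrac{k-1}{k}\Bigr)^{4t}+\bigl(4(k-1)k!+6(k-1)(k-1)!\bigr)\Bigl(\tfrac{k-1}{k}\Bigr)^{2t}+8k!\Bigl(\tfrac{(k-1)(k+1)}{k^2}\Bigr)^{t}<2k!.
\]

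Each base is strictly less than $1$. This is obvious for $\frac{k-1}{k}$. For the last term, the point is that
\[
(k-1)k^2(k+1)=k^2(k^2-1)<k^4,
\]
so its base $1-\frac1{k^2}$ is also below $1$. This comparison is the only step that needs care, and it is immediate. Hence the left-hand side is a sum of three terms, each a fixed constant (depending on $k$) times a geometric sequence in $t$ with ratio less than $1$. The left-hand side therefore tends to $0$ as $t\to\infty$.

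Thus there is some $t_0=t_0(k)$ beyond which the left-hand side is below $2k!$. We can make this explicit: it suffices that each of the three terms is at most $\tfrac{2}{3}k!$. Since $\bigl(1-\frac1{k^2}\bigr)^t\le e^{-t/k^2}$ and $\bigl(\frac{k-1}{k}\bigr)^{2t}\le e^{-2t/k}$, one can take $t_0$ of order $k^2\log k$. Monotonicity in $t$ then gives the inequality for all $t\ge t_0$, which completes the argument.
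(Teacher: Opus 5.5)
Your proposal is correct and follows essentially the same route as the paper: combine Lemmas~\ref{lem:ordinary-lower-bound} and~\ref{lem:list-upper-bound}, cancel the leading $(k-1)k!\,k^{4t}$ term against $(k+1)!\,k^{4t}$ to leave a gap of $2k!\,k^{4t}$, and observe that the remaining terms, normalized by $k^{4t}$, decay geometrically with ratios $\frac{k-1}{k}$ and $1-\frac{1}{k^2}$. The paper differs only in how it makes $t_0$ explicit: it bounds all three terms by the slowest-decaying one, $(1-\frac1{k^2})^t$, using $(1-\frac1k)^2<1-\frac1{k^2}$, which gives $t_0=\lceil k^2\log\frac{7k+1}{2}\rceil$; in your explicit variant, require each term to be strictly below $\frac23 k!$ (not merely at most $\frac23 k!$) so that the final inequality is strict.
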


\begin{proof}
Let $L$ be the $(k+1)$-list assignment of $G_{k,t}$
	defined in (\ref{def-L}). 
Then by Lemmas~\ref{lem:ordinary-lower-bound} and~\ref{lem:list-upper-bound},
\aln{ineq8}{
&P(G_{k,t},k+1)-\Pl(G_{k,t},k+1)\nonumber\\ 
\ge & P(G_{k,t},k+1)-P(G_{k,t},L)\nonumber\\
\ge & (k+1)!k^{4t}-     (k-1)k!k^{4t}-3(k-2)(k-1)!(k-1)^{4t+1}-\big(4(k-1)k!\nonumber\\
&
+6(k-1)(k-1)!\big)\bigl(k(k-1)\bigr)^{2t}-8k!\bigl((k-1)k^2(k+1)\bigr)^t.
}
Then we need only to show that for all sufficiently large integers $t$,
\aln{ineq8-1}{
(k+1)!k^{4t}>   & (k-1)k!k^{4t}-3(k-2)(k-1)!(k-1)^{4t+1}
+(4(k-1)k!\nonumber\\
&
+6(k-1)(k-1)!)\bigl(k(k-1)\bigr)^{2t}+8k!\bigl((k-1)k^2(k+1)\bigr)^t.
}

Let $r=1-1/k, s=1-1/k^2,$ and
\begin{equation}\label{eq:def-F}
F_k(t)
=\frac{3(k-1)(k-2)}{2k}r^{4t}
+\frac{(2k+3)(k-1)}{k}r^{2t}
+4s^t.
\end{equation}
Then it can be easily verified that (\ref{ineq8-1}) holds if and only if $F_k(t)<1$.
Let $t_0$ be the integer defined below:
$$t_0=\left\lceil k^2\log\left(\frac{7k+1}{2}\right)\right\rceil.$$
In the following, we shall prove that $F_k(t)<1$ whenever $t\ge t_0$.

Since $k\ge 3$, we have
\[
r^2=\left(1-\frac1k\right)^2
<1-\frac1{k^2}=s,
\]
implying that $r^{4t}<r^{2t}<s^t$.  Then for any integer $t\ge t_0$,
\aln{ineq8-2}{
F_k(t)<&\left(
\frac{3(k-1)(k-2)}{2k}
+\frac{(2k+3)(k-1)}{k}+4
\right)s^t\nonumber\\
=&\frac{7k+1}{2}(1-\frac{1}{k^2})^t\nonumber\\
<&\frac{7k+1}{2}\exp(-t/k^2)\nonumber\\
\le&\frac{7k+1}{2}\exp\left(-\log\left(\frac{7k+1}{2}\right)\right)\nonumber\\
=1.
}
Hence the result holds.
\end{proof}

By Lemma~\ref{prop:equality-at-k} and Proposition~\ref{thm:main},  Theorem~\ref{main1} is proven.

\section{Proof of Theorem~\ref{main2}
\label{sec3}}
In this section, we prove Theorem~\ref{main2} after introducing an elementary property of the list-color function.

Let $r\in\N$. For any pair of vertex-disjoint graphs $G$ and $H$ that each contains a copy of $K_r$, let $G\cup_r H$ denote a graph obtained by identifying a copy of $K_r$ in  $G$ with a copy of $K_r$ in $H$. 
It is well known that 
for any $r,k\in \N$,
\aln{ineq11}
{
  P(G\cup_r H,k)=\frac{P(G,k)P(H,k)}{(k)_r}.
}
However, for the list-color function, the analogous identity does not hold in general. For example, $\Pl(C_4,2)=2$ while $\Pl(C_4\cup_1 C_4,2)=0$.

On the other hand, for the DP-color function, it is known that
$$P_{DP}(G\cup_r H,k)\le \frac{P_{DP}(G,k)P_{DP}(H,k)}{(k)_r}$$ holds for $r=1,2$ and all $k\in\N$,
whereas a counterexample exists for $r=3$ and $k=4$~\cite{Beck2021,KaulMax21}. This naturally raises the question of whether for the list-color function,
\aln{ineq11-1}
{\Pl(G\cup_r H,k)\le \frac{\Pl(G,k)\Pl(H,k)}{(k)_r}}
holds for all $r,k\in\N$.

In fact, (\ref{ineq11-1}) is not true even for $r=2$.
To see this, consider the graph $K_{2,12}\cup_2 K_3$. 
It is clear that for any 3-list assignment $L$ of $K_{2,12}\cup_2 K_3$, an $L|_{V(K_{2,12})}$-coloring of $K_{2,12}$ naturally extends  to an $L$-coloring of $K_{2,12}\cup_2 K_3$,
implying that $\Pl(K_{2,12}\cup_2 K_3,3)\ge \Pl(K_{2,12},3)$.
Moreover, results in~\cite{Kaul23} show that every $3$-list assignment 
$L$ of $K_{2,12}$ satisfying $P(K_{2,12},L)=\Pl(K_{2,12},3)$
has the property that $L(u)\neq L(v)$ for every 
$uv\in E(K_{2,12})$. Then for any 3-list assignment $L$ of $K_{2,12}\cup_2 K_3$ such that $P(K_{2,12},L|_{V(K_{2,12})})=\Pl(K_{2,12},3)$, an $L|_{V(K_{2,12})}$-coloring of $K_{2,12}$ does 
not extend uniquely to $L$-colorings of $K_{2,12}\cup_2 K_3$,
implying that
$\Pl(K_{2,12}\cup_2 K_3,3)>\Pl(K_{2,12},3)$.
Therefore
\[
\Pl(K_{2,12}\cup_2 K_3,3)>\Pl(K_{2,12},3)=
\frac{\Pl(K_{2,12},3)\Pl(K_3,3)}{3\cdot 2},
\]
which provides a counterexample to (\ref{ineq11-1}) when $r=2$.

Thus, we restrict our attention to the case $r=1$, as follows.
\begin{lemma}\label{lemm}
For  any $k\in \N$ and any pair of vertex-disjoint graphs $G$ and $H$,
\aln{ineq12}
{
  \Pl(G\cup_1 H,k)
  \leq \frac{\Pl(G,k)\Pl(H,k)}{k}.
}
\end{lemma}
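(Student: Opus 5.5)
Let $v$ denote the vertex of $G\cup_1 H$ obtained by identifying $u\in V(G)$ with $w\in V(H)$. We will prove (\ref{ineq12}) by exhibiting one $k$-list assignment $L$ of $G\cup_1 H$ with $P(G\cup_1 H,L)\le \Pl(G,k)\Pl(H,k)/k$. Let $L_G$ and $L_H$ be $k$-list assignments of $G$ and $H$ attaining $\Pl(G,k)$ and $\Pl(H,k)$. Renaming colors by a bijection of $\N$ preserves the number of colorings. So we may assume $L_G(u)=L_H(w)=[k]$, and for the same reason we may relabel the colors of $L_H$ freely.

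For a color $a\in[k]$, let $g_a$ be the number of $L_G$-colorings of $G$ with $u$ colored $a$, and let $h_a$ be the analogous number for $H$ at $w$. Then $\sum_a g_a=\Pl(G,k)$ and $\sum_a h_a=\Pl(H,k)$. Let $\sigma$ be a permutation of $[k]$. Apply to $L_H$ a bijection of $\N$ that extends $\sigma$ on $[k]$. Defining $L$ as $L_G$ on $V(G)$ and as this relabeled $L_H$ on $V(H)$ (the two agree at $v$, both being $[k]$) gives a $k$-list assignment of $G\cup_1 H$. Since $v$ is a cut vertex, an $L$-coloring is exactly a pair of compatible colorings agreeing at $v$, so
\[
P(G\cup_1 H,L)=\sum_{a\in[k]} g_a\, h_{\sigma^{-1}(a)}.
\]

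The key step is choosing $\sigma$ so that this sum is at most $\frac1k\sum_a g_a\sum_b h_b$. Averaging over the $k$ cyclic shifts $\sigma_j(b)=b+j \pmod k$ works: summing over $j$ gives
\[
\sum_j\sum_a g_a h_{a-j}=\Big(\sum_a g_a\Big)\Big(\sum_b h_b\Big).
\]
Hence some $j$ gives $P(G\cup_1 H,L)\le \Pl(G,k)\Pl(H,k)/k$. Alternatively, the rearrangement inequality gives this directly: pair $g$ in decreasing order with $h$ in increasing order, and the resulting sum is at most the average (Chebyshev's sum inequality). Then $\Pl(G\cup_1 H,k)\le P(G\cup_1 H,L)$ finishes the proof.

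The main point requiring care is the normalization step, namely that list assignments can be relabeled so that the lists at $u$ and $w$ coincide, and that an arbitrary permutation of that list can be realized while keeping $L_H$ optimal. This holds because applying any injection of $\N$ to all lists of $H$ gives an isomorphic list-coloring instance. No other obstacle is expected. Note also that the case $k=1$ is trivial.
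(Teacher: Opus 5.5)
Your proposal is correct and follows essentially the same argument as the paper: normalize $L_G(u)=L_H(w)=[k]$, decompose the count through the identified vertex as $\sum_a g_a h_{\sigma^{-1}(a)}$, and choose a relabeling of $H$'s colors whose sum is at most the average $\Pl(G,k)\Pl(H,k)/k$. The only difference is cosmetic: you average over the $k$ cyclic shifts (or invoke Chebyshev's sum inequality), whereas the paper averages over all $k!$ permutations in $S_k$; both give the same average because both act transitively on $[k]$.
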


\begin{proof}
Suppose $G\cup_1 H$ is obtained by identifying $u$ in $V(G)$ and $v$ in $V(H)$.

Let $L_G$ and $L_H$ be a $k$-list assignment of $G$ and $H$, respectively, such that 
\[
  P(G,L_G)=\Pl(G,k)
  \qquad\text{and}\qquad
  P(H,L_H)=\Pl(H,k).
\]
Moreover, we assume without loss of generality that
\[
  L_G(u)=\{1,\dots,k\}
  \qquad\text{and}\qquad
  L_H(v)=\{1,\dots,k\}.
\]

In the following, we shall construct a $k$-list assignment $L$ of $G\cup_1 H$ such that  
\aln{ine10}{
P(G\cup_1 H,L) \leq \frac{ P(G,L_G)  P(H,L_H)}{k}= \frac{\Pl(G,k)\Pl(H,k)}{k}.
}

For $i=1,2,\dots,k$, let $g_i$ be the number of $L_G$-colorings $\theta$ such that $\theta(u)=i$, and let $h_i$ be the number of $L_H$-colorings $\varphi$ such that $\varphi(v)=i$.
Then
\[
  \sum_{i=1}^k g_i=  P(G,L_G)
  \qquad\text{and}\qquad
  \sum_{j=1}^k h_j=  P(H,L_H).
\]

For each permutation $\sigma\in S_k$, set
$
  S_\sigma=\sum_{i=1}^k g_i h_{\sigma(i)}.
$
Then
\aln{eq5}{
  \frac{1}{k!}\sum_{\sigma\in S_k}S_\sigma
   &=\ \frac{1}{k!}\sum_{\sigma\in S_k}\left(\sum_{i=1}^k g_i h_{\sigma(i)}
   \right)\nonumber\\
  &=\sum_{i=1}^k g_i
    \left(\frac{1}{k!}\sum_{\sigma\in S_k}h_{\sigma(i)}\right)\nonumber\\
  &=\sum_{i=1}^k g_i
    \left(\frac{(k-1)!}{k!}\sum_{j=1}^k h_j\right)\nonumber\\
  &=\frac{1}{k}
    \left(\sum_{i=1}^k g_i\right)
    \left(\sum_{j=1}^k h_j\right)\nonumber\\
  &=\frac{  P(G,L_G)  P(H,L_H)}{k}.
}
Since $|S_k|=k!$, (\ref{eq5}) indicates a permutation $\sigma_0\in S_k$ such that
\aln{eq5-1}{
  S_{\sigma_0}
  \leq \frac{  P(G,L_G)  P(H,L_H)}{k}= \frac{\Pl(G,k)\Pl(H,k)}{k}.
}
Let $f$ be the mapping from $\N$ to $\N$ such that $f(\sigma_0(i))=i$ for all $i\in [k]$ and $f(i)=i$ for $i\ge k+1$.

Now we shall construct a special $k$-list assignment $L$ of $G\cup_1H$, where
$$
\left\{
\begin{aligned}
&L(x)= L_G(x)~\text{for all}~x\in V(G),\\
&L(y)=\{f(i): i\in L_H(y)\}~\text{for all}~y\in V(H)\setminus\{v\}.
\end{aligned}
\right.
$$
Then it is clear by (\ref{eq5-1}) that 
\aln{eq5-2}{
P_\ell(G\cup_1 H,k)\le  P(G\cup_1 H,L)
  =\sum_{i=1}^k g_i h_{\sigma_0(i)}
  =S_{\sigma_0}
    \leq \frac{\Pl(G,k)\Pl(H,k)}{k}.
}
\end{proof}

Now we give the proof of Theorem~\ref{main2}.

\noindent\textit{Proof of Theorem~\ref{main2}.}
For any graph $G$ with $P(G,k)=\Pl(G,k)>0$, let $H_{k,t}$ be a graph obtained by identifying one vertex of $G$ and $x_k$ of $G_{k,t}$. We shall show that $P(H_{k,t},k)=\Pl(H_{k,t},k)>0$ and
$P(H_{k,t},k+1)>\Pl(H_{k,t},k+1)$ whenever $t$ is sufficiently large.

Recall that $P(G_{k,t},k+1)>\Pl(G_{k,t},k+1)$ whenever $t$ is sufficiently large. Then by (\ref{ineq11}) and (\ref{ineq12}), 
\aln{}{
\Pl(H_{k,t},k+1)&\le \frac{\Pl(G,k+1)\Pl(G_{k,t},k+1)}{k+1}\nonumber\\
&<
\frac{P(G,k+1)P(G_{k,t},k+1)}{k+1}\nonumber\\
&=P(H_{k,t},k+1).
}
Thus it remains to show that $P(H_{k,t},k)=\Pl(H_{k,t},k)>0$.

By Lemma~\ref{prop:equality-at-k} and (\ref{ineq11}), we have
\aln{ieq6-11}{
P(H_{k,t},k)= \frac{P(G,k)P(G_{k,t},k)}{k}
=(k-1)!(k-2)^{4t}P(G,k)>0.
}

We next consider $\Pl(H_{k,t},k)$. 
Let $L$ be a $k$-list assignment of $H_{k,t}$ such that $\Pl(H_{k,t},k)=P(H_{k,t},L)$. Then there are at least $\Pl(G,k)$  $L|_{V(G)}$-colorings of $G$. Moreover, each of them can be extended to at least $(k-1)!(k-2)^{4t}$ 
$L$-colorings of $H_{k,t}$  by considering in the order $x_1,\ldots,x_{k-1}, s_1, s_2,y_{1,1}, \dots, y_{4,t}$. Thus
\aln{ieq6-1}{
   \Pl(H_{k,t},k)=P(H_{k,t},L)\geq (k-1)!(k-2)^{4t}\Pl(G,k)= (k-1)!(k-2)^{4t}P(G,k).
}
Hence $\Pl(H_{k,t},k)=     P(H_{k,t},k)>0$ follows from (\ref{ieq6-11}) and (\ref{ieq6-1}).
\qed

\section{Concluding Remarks
\label{sec4}}
Although all counterexamples constructed in the proofs of Theorems~\ref{main1} and~\ref{main2} for each integer $k\ge 3$ contain copies of $K_k$, we shall show in this section that the presence of $K_k$ is not essential.

\begin{figure}[htbp]
\centering
\resizebox{0.8\textwidth}{!}{%
\begin{tikzpicture}[
    x=1cm,
    y=1cm,
    vertex/.style={
        circle,
        draw=black,
        thick,
        minimum size=3mm,
        inner sep=0pt
    },
    xvertex/.style={vertex,fill=blue!18},
    svertex/.style={vertex,fill=orange!25},
    yvertex/.style={vertex,fill=green!18,minimum size=3mm},
    edge/.style={draw=black,line width=0.75pt},
    every label/.style={text=black,font=\small,inner sep=2pt}
]

\node[yvertex,label=below left:{}]  (u1)  at (0,0) {};
\node[xvertex,label=above:{$s_1'$}]       (u2)  at (0,6) {};
\node[svertex,label=above:{$x_2'$}]       (u3)  at (6.5,6) {};
\node[svertex,label=below:{}]       (u4)  at (6.5,0) {};
\node[yvertex,label=below left:{}]  (u5)  at (1.7,2.2) {};
\node[xvertex,label=above left:{$x_1'$}]  (u6)  at (1.7,3.6) {};
\node[svertex,label=above:{}]       (u7)  at (2.6,4.5) {};
\node[yvertex,label=above:{$x_3'$}]       (u8)  at (3.8,4.5) {};
\node[xvertex,label=above right:{}] (u9)  at (4.7,3.6) {};
\node[yvertex,label=below right:{}](u10) at (4.7,2.2) {};
\node[xvertex,label=below:{}]      (u11) at (3.8,1.3) {};
\node[svertex,label=below:{$s_2'$}]      (u12) at (2.6,1.3) {};

\node[xvertex,label=below right:{}]  (p1)  at (14.2,0) {};
\node[svertex,label=below left:{}]   (p2)  at (8.6,0) {};
\node[yvertex,label=above left:{}]   (p3)  at (8.6,6.4) {};
\node[yvertex,label=above right:{}]  (p4)  at (14.2,6.4) {};
\node[svertex,label=below right:{}]  (p5)  at (12.1,1.3) {};
\node[yvertex,label=below left:{}]   (p6)  at (10.7,1.3) {};
\node[xvertex,label=left:{}]         (p7)  at (9.9,2.2) {};
\node[svertex,label=left:{}]         (p8)  at (9.9,3.6) {};
\node[xvertex,label=above left:{}]   (p9)  at (10.7,4.5) {};
\node[xvertex,label=above right:{}] (p10) at (12.1,4.5) {};
\node[svertex,label=right:{}]       (p11) at (12.9,3.6) {};
\node[yvertex,label=right:{}]       (p12) at (12.9,2.2) {};

\begin{scope}[on background layer]
    \draw[edge] (u7)  .. controls (4.4,8.15) and (7.0,8.15) .. (p3);
    \draw[edge] (u2)  .. controls (-3,-3) and (-1.15,-2.15) .. (p2);
    \draw[edge] (u10) .. controls (6.45,2.15) and (7.75,1.15) .. (p2);
    \draw[edge] (u10) .. controls (6.7,5.55) and (9.6,5.7) .. (p10);
    \draw[edge] (u3)  .. controls (6.85,3.45) and (8.25,1.05) .. (p6);

    \draw[edge] (u1)--(u2)--(u3);
    \draw[edge] (u1)--(u4);
    \draw[edge] (u1)--(u6);
    \draw[edge] (u1)--(u12);
    \draw[edge] (u2)--(u7);
    \draw[edge] (u3)--(u10);
    \draw[edge] (u4)--(u9);
    \draw[edge] (u4)--(u11);
    \draw[edge] (u5)--(u6)--(u7)--(u8)--(u9);
    \draw[edge] (u10)--(u11)--(u12)--(u5);
    \draw[edge] (u5)--(u9);
    \draw[edge] (u6)--(u10);
    \draw[edge] (u7)--(u11);
    \draw[edge] (u8)--(u12);

    \draw[edge] (p3)--(p2)--(p1)--(p4);
    \draw[edge] (p1)--(p6);
    \draw[edge] (p1)--(p12);
    \draw[edge] (p2)--(p7);
    \draw[edge] (p3)--(p10);
    \draw[edge] (p4)--(p9);
    \draw[edge] (p4)--(p11);
    \draw[edge] (p9)--(p8)--(p7)--(p6)--(p5)--(p12)--(p11)--(p10);
    \draw[edge] (p9)--(p5);
    \draw[edge] (p10)--(p6);
    \draw[edge] (p8)--(p12);
    \draw[edge] (p7)--(p11);
\end{scope}

\end{tikzpicture}%
}
\caption{The graph $Q$.}
\label{fig:Q}
\end{figure}

Let $Q$ be the graph as shown in Figure~\ref{fig:Q}. $Q$ was constructed in~\cite{Akbari2001} as an example that is uniquely 3-colorable but contains no $K_3$. Therefore, $P(Q,3)=3!=6$. Moreover, $\Pl(Q,3)$ can be determined by the following theorem.

\begin{theorem}[\cite{Akbari2006}]\label{them1}
Let $G$ be a graph on a set $V=\{v_1,v_2,\dots,v_n\}$ of $n\ge 1$ vertices. For $1\le i\le n$, let $L_{v_i}$ be a list of $d_i+1$ colors where $d_i\ge 0$ is a given integer. Suppose that $G$ is uniquely $L$-colorable and $d_1+d_2+\cdots+ d_n=m$, where $m$ is the size of $G$. Then $G$ has $f$-colorings for every list assignment $f$ of $G$ provided $f(v_i)=d_i+1$ for $1\le i\le n$.
\end{theorem}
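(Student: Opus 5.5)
This is the Alon--Tarsi style statement of Akbari et al., and I would prove it with the Combinatorial Nullstellensatz applied to the graph polynomial
\[
F_G(z_1,\dots,z_n)=\prod_{\substack{v_iv_j\in E(G)\\ i<j}}(z_i-z_j),
\]
which is homogeneous of degree $m$. An assignment $z_i=a_i$ gives $F_G(a)\neq 0$ exactly when $v_i\mapsto a_i$ is a proper coloring. The Nullstellensatz says the following: if the coefficient of $\prod_i z_i^{d_i}$ in $F_G$ is nonzero and $\sum_i d_i=\deg F_G=m$, then for any sets $A_i$ with $|A_i|=d_i+1$ there is $a\in\prod_i A_i$ with $F_G(a)\neq 0$. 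That is precisely the desired conclusion for every list assignment $f$ with $|f(v_i)|=d_i+1$. So the proof reduces to showing that this coefficient is nonzero whenever $G$ is uniquely $L$-colorable for some lists $L_{v_i}$ of sizes $d_i+1$.

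To obtain the coefficient, I would use the interpolation formula that underlies the quantitative Nullstellensatz (Lason / Karasev--Petrov). For $F$ of degree $\sum d_i$ and any sets $A_i$ of size $d_i+1$,
\[
[z_1^{d_1}\cdots z_n^{d_n}]F=\sum_{a\in A_1\times\cdots\times A_n}\frac{F(a)}{\prod_{i=1}^n\phi_i'(a_i)},\qquad \phi_i(z)=\prod_{c\in A_i}(z-c).
\]
I would prove this by reducing $F$ modulo the ideal generated by the $\phi_i(z_i)$. Only the top monomial survives with the given coefficient, and for each variable separately Lagrange interpolation identifies the leading coefficient of a univariate polynomial of degree at most $d_i$ as $\sum_{c}p(c)/\phi_i'(c)$. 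Taking $A_i=L_{v_i}$, with the colors viewed as distinct integers or rationals, the summands vanish except at proper $L$-colorings. By uniqueness there is exactly one such $a$, so the coefficient equals $F_G(a)/\prod_i\phi_i'(a_i)$. Both factors are nonzero: the numerator because $a$ is proper, and each $\phi_i'(a_i)=\prod_{c\in A_i\setminus\{a_i\}}(a_i-c)$ because the list elements are distinct. Hence the coefficient is nonzero.

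The last step applies the Nullstellensatz to an arbitrary $f$ with $|f(v_i)|=d_i+1$, after embedding the colors into a field such as $\mathbb{Q}$. This produces a proper $f$-coloring, which proves the theorem. I expect the main obstacle to be the coefficient formula itself, specifically the reduction step that justifies why only the monomial $\prod_i z_i^{d_i}$ contributes to the "leading" part once $\deg F=\sum_i d_i$. The hypothesis $\sum_i d_i=m$ is used exactly here: every other monomial of degree $m$ has some exponent exceeding $d_j$ and another below $d_i$. The reduction therefore lowers it to something with no full top term in every variable, and the sum $\sum_a(\cdot)/\prod\phi_i'$ annihilates all such terms. I would verify this annihilation monomial by monomial using the univariate identity $\sum_{c\in A}c^e/\phi'(c)=0$ for $e<|A|-1$.
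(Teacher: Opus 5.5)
The paper does not prove Theorem~\ref{them1}. It imports the result from Akbari, Mirrokni and Sadjad and uses it only as a black box in Proposition~\ref{prop1}, so there is no internal proof to compare against.

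Your argument is correct and complete. It is the standard Combinatorial Nullstellensatz proof, and to my knowledge also the route of the cited source. Uniqueness of the $L$-coloring collapses the Karasev--Petrov/Laso\'n sum over $\prod_i L_{v_i}$ to a single nonzero term, which gives $[\prod_i z_i^{d_i}]F_G\neq 0$.

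The reduction modulo the $\phi_i(z_i)$ that you flag as the main obstacle is not actually needed. For $|e|\le\sum_i d_i$ and $e\ne d$, some $e_i<d_i$. The grid sum of $z^e/\prod_i\phi_i'(z_i)$ factors as $\prod_i\sum_{c\in A_i}c^{e_i}/\phi_i'(c)$, and its $i$-th factor already vanishes whatever the other exponents are.

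Your route also gives more than the citation does in this paper. Take any $3$-list assignment $L$ of $Q$. Your argument shows $F_Q$ is nonzero somewhere on $\prod_w L(w)$. The Alon--F\"uredi theorem, with degree $45$ and twenty-four coordinate sets of size $3$, then yields at least $\min\{\prod_w y_w : 1\le y_w\le 3,\ \sum_w y_w\ge 72-45\}=6$ proper $L$-colorings. This gives $\Pl(Q,3)\ge 6$ directly, bypassing the $L_i$, $L_i'$ construction in the proof of Proposition~\ref{prop1}.
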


\begin{proposition}\label{prop1}
$\Pl(Q,3)=P(Q,3)=6$.
\end{proposition}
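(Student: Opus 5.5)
The approach is to prove $\Pl(Q,3)\ge 6$ by applying Theorem~\ref{them1} once for each of at least six pairs of colours on a fixed edge of $Q$. The reverse inequality $\Pl(Q,3)\le P(Q,3)=6$ follows from (\ref{listcolor}) together with the unique $3$-colorability of $Q$.

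\textbf{Step 1 (edge count).} The degree-sum hypothesis of Theorem~\ref{them1} needs $|E(Q)|=2|V(Q)|-3$. From Figure~\ref{fig:Q}, $Q$ has $n=24$ vertices. Each of the two $12$-vertex pieces carries $20$ edges, and $5$ edges join the two pieces. So $m=|E(Q)|=45=2n-3$. I will recheck this count carefully against the figure, since the whole argument rests on it.

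\textbf{Step 2 (a uniquely list-colorable reference assignment).} Fix an edge $uv$ of $Q$, for instance $u=s_1'$ and $v=x_2'$. Let $\theta$ be the unique (up to permuting colours) proper $3$-coloring of $Q$ with colours in $[3]$. Define $L_0(u)=\{\theta(u)\}$, $L_0(v)=\{\theta(u),\theta(v)\}$, and $L_0(w)=[3]$ for every other vertex $w$. Then $d_u=0$, $d_v=1$ and $d_w=2$ otherwise, so
\[
\sum_w d_w=1+2(n-2)=2n-3=m .
\]
$Q$ is uniquely $L_0$-colorable, for the following reasons.
\begin{itemize}
\item Every $L_0$-coloring is a proper $3$-coloring of $Q$, hence equals $\theta$ up to a permutation $\pi$ of $[3]$.
\item The vertex $u$ is forced to receive $\theta(u)$.
\item Since $uv$ is an edge, $v$ must avoid $\theta(u)$, so $v$ is forced to receive $\theta(v)$.
\item The permutation $\pi$ therefore fixes two colours, so it is the identity.
\end{itemize}
Theorem~\ref{them1} now shows that $Q$ has an $f$-coloring for every list assignment $f$ with $|f(u)|=1$, $|f(v)|=2$, and $|f(w)|=3$ otherwise.

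\textbf{Step 3 (producing six colorings).} Let $L$ be any $3$-list assignment of $Q$. For each $a\in L(u)$ and each $b\in L(v)\setminus\{a\}$, define
\[
f_{a,b}(u)=\{a\},\qquad f_{a,b}(v)=\{a,b\},\qquad f_{a,b}(w)=L(w)\ \text{for all other } w .
\]
\begin{itemize}
\item By Step~2, $Q$ has an $f_{a,b}$-coloring $\varphi_{a,b}$.
\item We have $\varphi_{a,b}(u)=a$. Since $uv\in E(Q)$, also $\varphi_{a,b}(v)=b$.
\item Each $\varphi_{a,b}$ is an $L$-coloring.
\item Distinct pairs $(a,b)$ give distinct colorings, because they differ on $u$ or on $v$.
\end{itemize}
There are at least $3\cdot 2=6$ such pairs, so $P(Q,L)\ge 6$. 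Hence $\Pl(Q,3)\ge 6=P(Q,3)$.

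The key trick is choosing $u$ and $v$ adjacent, so that the $2$-list at $v$ forces its colour. The only real obstacle I expect is the verification in Step~1 that $|E(Q)|=2n-3$ exactly, which Theorem~\ref{them1} requires.
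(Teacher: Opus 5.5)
Your proof is correct and follows the paper's argument. It uses the same reference assignment $L_0$ with list sizes $1,2,3$, unique $L_0$-colorability from unique $3$-colorability, the count $0+1+2\cdot 22=45=|E(Q)|$ (which checks out against the figure as $20+20+5$), and Theorem~\ref{them1} applied to six modified list assignments. The only difference is cosmetic: you take $u,v$ adjacent, so the list $\{a,b\}$ at $v$ forces $v$'s color and the six colorings are indexed directly by pairs $(a,b)$, whereas the paper only needs $u,v$ in different color classes and gets two colorings per color at $u$ by reapplying the theorem with $L(v)\setminus\{\theta_i(v)\}$.
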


\begin{proof}
Since $\Pl(Q,3)\le P(Q,3)=6$, we need only to show that $Q$ has at least 6 $L$-colorings for all 3-list assignments $L$ of $Q$.

Assume that $u,v$ are two vertices from different partition sets in the unique partition of $V(Q)$ into three independent sets. Let $L_0$ be a list assignment of $Q$ such that $L_0(u)=\{1\}, L_0(v)=\{1,2\}$, and $L_0(w)=\{1,2,3\}$ for all $w\in V(Q)\setminus\{u,v\}$. Then $Q$ is uniquely $L_0$-colorable as $Q$ is uniquely 3-colorable.
Also, 
note that $|V(Q)|=24$ and $|E(Q)|=45$. Then $L_0$ satisfies the requirements of Theorem~\ref{them1} as $0+1+2\times22=45$, which implies that
$Q$ has $L$-colorings for every list assignment $L$ of $G$ provided $|L(u)|=1$, $|L(v)|=2$, and $|L(w)|=3$ for all $w\in V(Q)\setminus\{u,v\}$.

Now we consider any 3-list assignment $L$ of $Q$. Assume that $L(u)=\{1,2,3\}$ and $L(v)=\{a,b,c\}$. For each $i\in [3]$, let $L_i(u)=\{i\}, L_i(v)=\{a,b\}$, and $L_i(w)=L(w)$ for all $w\in V(Q)\setminus\{u,v\}$. Then by Theorem~\ref{them1}, $Q$ has an $L_i$-coloring $\theta_i$, which is also an $L$-coloring. Further, let $L_i'(u)=\{i\}, L_i'(v)=L(v)\setminus \{\theta_i(v)\}$, and $L_i'(w)=L(w)$ for all $w\in V(Q)\setminus\{u,v\}$. We again obtain by Theorem~\ref{them1} an $L_i'$-coloring $\theta_i'$ for each $i\in [3]$, which is also an $L$-coloring and is different from $\theta_i$ as $\theta_i(v)\neq \theta_i'(v)$. Since $\theta_i(u)=\theta_i'(u)$ are pairwise distinct for all $i\in[3]$, we now have six $L$-colorings of $Q$, which completes the proof.
\end{proof}

For any pair of vertex-disjoint graphs $G$ and $H$, let $G\vee H$ be the \textit{join} of $G$ and $H$, i.e., $V(G\vee H)=V(G)\cup V(H)$ and $E(G\vee H)=E(G)\cup E(H)\cup \{uv: u\in V(G), v\in V(H)\}$.
Since $Q$ contains no $K_3$, $Q\vee K_{k-3}$ contains no $K_k$. We shall show that $Q\vee K_{k-3}$ is an ideal replacement for $G_{k,t}[X_k\cup S]$ after proving the next proposition.

\begin{lemma}[\cite{Kaul2018}]\label{kaul2018}
For any graph $G$ and $n,k\in N$, $$\Pl(G,k-n)P(K_n,k)\le \Pl(G\vee K_n, k)\le P(G\vee K_n, k)=P(G,k-n)P(K_n,k).$$
\end{lemma}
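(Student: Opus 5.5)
The statement to prove is Lemma~\ref{kaul2018}: for $k>n$,
\[
\Pl(G,k-n)P(K_n,k)\le \Pl(G\vee K_n,k)\le P(G\vee K_n,k)=P(G,k-n)P(K_n,k).
\]
The middle inequality is just (\ref{listcolor}), so the proof has two parts.

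\textbf{The equality.} I would count proper $k$-colorings of $G\vee K_n$ by coloring the clique $K_n$ first. There are $(k)_n=P(K_n,k)$ ways to do this. Every vertex of $G$ is adjacent to all $n$ clique vertices, and these carry $n$ distinct colors. So the vertices of $G$ must be colored properly from the remaining $k-n$ colors, and this set is the same for every vertex of $G$. That gives exactly $P(G,k-n)$ completions, and hence the product formula.

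\textbf{The lower bound.} Fix an arbitrary $k$-list assignment $L$ of $G\vee K_n$ and write $K=V(K_n)$. The goal is to show $P(G\vee K_n,L)\ge (k)_n\,\Pl(G,k-n)$. The difficulty is that the lists on $K$ and on $V(G)$ are unrelated. So the counting order matters: the colors removed from each $L(v)$ with $v\in V(G)$ are exactly the colors used on $K$ that happen to lie in $L(v)$.

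\begin{enumerate}
\item Color $K$ greedily in some order. The $i$-th clique vertex has at least $k-i+1$ available colors, so there are at least $(k)_n$ such colorings $\phi$.
\item Fix one such $\phi$. Each $v\in V(G)$ has list $L(v)\setminus\phi(K)$ of size at least $k-n$.
\item Shrink each of these lists to an arbitrary subset of size exactly $k-n$. Shrinking lists can only decrease the number of colorings, and the shrunk assignment is a $(k-n)$-list assignment of $G$. So there are at least $\Pl(G,k-n)$ completions of $\phi$.
\end{enumerate}

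Different clique colorings $\phi$ give disjoint sets of $L$-colorings. Summing over $\phi$ gives
\[
P(G\vee K_n,L)\ge (k)_n\,\Pl(G,k-n).
\]
Taking the minimum over $L$ yields the lower bound.

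I expect no real obstacle. The only subtle point is the one handled in step 3: the residual lists have variable sizes at least $k-n$, and truncating them to size exactly $k-n$ lets us invoke $\Pl$ monotonically. The case $k\le n$ should also be handled: there $P(K_n,k)=0$ unless $k=n$, and when $k=n$ the conventions $P(G,0)=\Pl(G,0)=0$ for nonempty $G$ make all three quantities zero.
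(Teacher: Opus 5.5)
Your proof is correct and complete. For the equality you color the clique first, and for the lower bound you note that each of the at least $(k)_n$ list-colorings $\phi$ of $K_n$ leaves every $v\in V(G)$ a residual list $L(v)\setminus\phi(V(K_n))$ of size at least $k-n$; truncating these lists gives a $(k-n)$-list assignment of $G$, hence at least $\Pl(G,k-n)$ extensions of each $\phi$. The paper gives no proof of this lemma and simply quotes it from \cite{Kaul2018}, so there is nothing to compare against. Your clique-first greedy count is the standard argument, and it uses the same kind of ordered counting as Lemma~\ref{prop:equality-at-k} and the bound on $\Pl(H_{k,t},k)$ in the proof of Theorem~\ref{main2}.
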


\begin{proposition}\label{prop3}
For any $k\ge 3$,
$\Pl(Q\vee K_{k-3},k)=P(Q\vee K_{k-3},k)=k!$.
\end{proposition}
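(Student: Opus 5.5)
We apply Lemma~\ref{kaul2018} with $G=Q$ and $n=k-3$, so that $k-n=3$. The lower bound in that lemma gives
\[
\Pl(Q\vee K_{k-3},k)\ \ge\ \Pl(Q,3)\,P(K_{k-3},k),
\]
and the upper bound gives
\[
\Pl(Q\vee K_{k-3},k)\ \le\ P(Q\vee K_{k-3},k)=P(Q,3)\,P(K_{k-3},k).
\]
By Proposition~\ref{prop1}, $\Pl(Q,3)=P(Q,3)=6$, so the two bounds coincide. Hence
\[
\Pl(Q\vee K_{k-3},k)=P(Q\vee K_{k-3},k)=6\,P(K_{k-3},k).
\]

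It remains to evaluate $P(K_{k-3},k)=(k)_{k-3}=k(k-1)\cdots 4=k!/3!$. Multiplying by $6=3!$ gives $k!$, as claimed. For the boundary case $k=3$ we have $K_0$ empty and $P(K_0,k)=1$. In that case the statement reduces directly to Proposition~\ref{prop1}, so no appeal to Lemma~\ref{kaul2018} is needed; alternatively one reads the lemma with $n=0$ under the convention that the join with $K_0$ leaves the graph unchanged.

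The proof is essentially bookkeeping. The only point needing care is matching the parameters in Lemma~\ref{kaul2018} correctly, namely that the lemma's ``$k-n$'' equals $3$ when $n=k-3$. As an alternative, one could also verify $P(Q\vee K_{k-3},k)=k!$ directly. Every proper $k$-coloring uses $k-3$ distinct colors on the clique, leaving exactly $3$ colors for $Q$, and $Q$ is uniquely $3$-colorable. This gives $(k)_{k-3}\cdot 3!=k!$.
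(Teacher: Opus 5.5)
Your proposal is correct and matches the paper's proof: both sandwich $\Pl(Q\vee K_{k-3},k)$ between the two bounds of Lemma~\ref{kaul2018} and close the gap with Proposition~\ref{prop1}. The only difference is cosmetic. You evaluate $6\,(k)_{k-3}=k!$ explicitly, whereas the paper cites unique $k$-colorability, which is the alternative you also mention. Your remark on the $k=3$ case ($n=0$, which the lemma's hypothesis $n\in\N$ does not literally cover) is a small point of care that the paper leaves implicit.
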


\begin{proof}
By Lemma~\ref{kaul2018}, we have
$$\Pl(Q,3)P(K_{k-3},k)\le \Pl(Q\vee K_{k-3},k)\le P(Q\vee K_{k-3},k)=P(Q,3)P(K_{k-3},k).$$
Then by Proposition~\ref{prop1}, the first equality holds.

The second equality is then obvious as $Q\vee K_{k-3}$ is uniquely $k$-colorable. 
\end{proof}

Observe that $G_{k,t}[X_k\cup S]$ is uniquely $k$-colorable, where $x_1,x_2,\dots,x_{k}$ are in different color classes while $x_i$ and $s_i$ are in the same color class for $i\in[2]$, and $s_1,s_2$ are nonadjacent. We shall see that a similar structure also exists in $Q\vee K_{k-3}$.

Let $x_1',x_2',x_3',s_1',s_2'$ be the vertices in $Q$ as shown in Figure~\ref{fig:Q}, and let $V(K_{k-3})=\{x_4',x_5',\dots,x_{k}'\}$. 
Then $Q\vee K_{k-3}$ is uniquely $k$-colorable, where
$x_1',x_2',\dots,x_{k}'$ are in different color classes while $x_i'$ and $s_i'$ are in the same color class for $i\in[2]$, and $s_1',s_2'$ are nonadjacent. 
Thus replacing $G_{k,t}[X_k\cup S]$ and $x_1,x_2,\dots,x_{k},s_1,s_2$ in $G_{k,t}$  with $Q\vee K_{k-3}$ and $x_1',x_2',x_3',\dots,x_{k}',s_1',s_2'$ provides us another infinite family of graphs  supporting Theorem~\ref{main1} that contains no $K_{k}$. 
The proof can be carried out based on Proposition~\ref{prop3} by establishing a series of results analogous to Lemmas~\ref{prop:equality-at-k}, \ref{lem:ordinary-lower-bound}, \ref{lem:dominant-product}, and \ref{lem:list-upper-bound}, which together yield Proposition~\ref{thm:main}.

However, it remains unclear for which graphs the agreement of the chromatic polynomial and the list-color function always persists.

\begin{question}
Characterize the graphs $G$ such that for all integers $k$,
$P(G,k)=P_{\ell}(G,k)>0$
implies
$P(G,k+1)=P_{\ell}(G,k+1).$
\end{question}

\medskip
\noindent\textbf{Declaration on the use of AI.}
During the preparation of this work, the authors used AI systems to assist in
generating candidate examples for Theorem~\ref{main1}. All AI-generated suggestions were
verified and refined by the authors, who take full responsibility for the correctness
and originality of the paper.

\end{document}